\documentclass[preprint,3p,11pt,authoryear]{elsarticle}
\usepackage{setspace}
\usepackage{graphicx}
\usepackage{makecell}
\usepackage[urlcolor=blue]{hyperref}
\usepackage{csquotes}
\usepackage{amsfonts}
\usepackage{subcaption}
\usepackage{amssymb}
\usepackage{amsmath, amsthm, amsfonts}
\usepackage{tabularx}
\usepackage{booktabs}

\usepackage{enumitem}
\usepackage{xcolor}

\newtheorem{proposition}{Proposition}
\theoremstyle{remark}
\newtheorem{remark}{Remark}

\begin{document}
\sloppy
\begin{frontmatter}

\title{Optimization-based lane-changing trajectory planning in mixed traffic with embedded input convex neural networks}

\author[1,2]{Yuanxiang Yang}
\ead{yuanxiang.yang@aalto.fi}

\author[3]{Yu Liu\corref{cor1}}
\ead{yu.2.liu@aalto.fi}

\author[4,3]{Fabricio Oliveira}
\ead{fabol@dtu.dk}

\author[5]{Yu Han}
\ead{yuhan2@hkust-gz.edu.cn}

\author[1,2]{Claudio Roncoli}
\ead{claudio.roncoli@kuleuven.be}

\affiliation[1]{organization={Mobility and Industrial Management (MIM), Faculty of Engineering Science, KU Leuven},
            city={Leuven},
            postcode={3001}, 
            country={Belgium}}

\affiliation[2]{organization={Department of Built Environment, Aalto University},
            city={Espoo},
            postcode={02150}, 
            country={Finland}}

\affiliation[3]{organization={Department of Mathematics and Systems Analysis, Aalto University},
            city={Espoo},
            postcode={02150}, 
            country={Finland}}
                        
\affiliation[4]{organization={DTU Management, Technical University of Denmark}, 
            city={Lyngby}, 
            postcode={2800},
            country={Denmark}}

\affiliation[5]{organization={The Hong Kong University of Science and Technology (Guangzhou)}, 
            city={Guangzhou}, 
            postcode={510642},
            country={China}}
            
\cortext[cor1]{Corresponding Author}

\begin{abstract}
Lane-changing (LC) maneuvers are highly challenging vehicle interactions, with substantial implications for traffic safety and efficiency. Developing effective LC strategies for connected and automated vehicles in mixed traffic is complicated by the uncertainty associated with surrounding-vehicle behavior and the computational burden of embedding realistic prediction models within optimization schemes. To capture these complex interactions while retaining tractability, this study proposes a data-driven optimization framework that embeds neural network~(NN) surrogates for safety and lateral comfort within a Bézier-curve-based trajectory planner. 
Specifically, the safety cost during the LC process is quantified via a surrogate composed of an input convex recurrent neural network (ICRNN) encoding the planned trajectory, a fully input convex neural network (FICNN) embedding the LC duration, and a partially input convex neural network (PICNN) decoder fusing them with historical traffic states; separately, another FICNN surrogate approximates the optimal lateral comfort cost as a function of the LC duration. 
Both surrogates are embedded via linear programming (LP) formulations, avoiding the binary variables required by conventional mixed-integer programming (MIP) approaches for NN-embedded optimization. We further theoretically establish their exactness at optimality and extend the framework to problems with additional analytical objective terms and multiple surrogates. 
Experiments on a hybrid dataset combining real-world and simulated trajectories demonstrate that the proposed method achieves surrogate accuracy competitive with unconstrained baselines while substantially improving optimization efficiency over conventional MIPs. 
Closed-loop simulations further show that the proposed planners markedly reduce the weighted planning cost and consistently outperform baseline methods. 
\end{abstract}

\begin{keyword}
Data-driven optimization \sep Lane-changing trajectory planning \sep Mixed traffic \sep Input convex neural networks \sep Surrogate modeling
\end{keyword}

\end{frontmatter}

\section{Introduction} \label{sec:intro}

The transition toward widespread vehicle automation holds significant potential to improve road safety, alleviate traffic congestion, and enhance overall transportation efficiency~\citep{papamichail2019motorway, karimi_cooperative_2020}. Realizing these benefits requires advanced control strategies capable of safely coordinating automated vehicles in complex environments. Among the different driving tasks, lane-changing (LC) trajectory planning stands as a cornerstone challenge, particularly in mixed traffic flow environments where connected automated vehicles (CAVs) co-exist with human-driven vehicles~(HDVs). In these heterogeneous traffic systems, the inherent unpredictability of human behavior introduces significant stochasticity, demanding LC trajectory planning algorithms that are not only computationally efficient but also robust and safety-critical. Considerable research has been dedicated to addressing this challenge, leading to the development of diverse techniques, which can be categorized into graph-search-based, learning-based, and optimization-based approaches~\citep{guo_survey_2023}.
 
Graph search algorithms are highly practical in complex and unstructured environments, due to their interpretability, optimality guarantee, and robust collision avoidance capabilities. However, the stability and quality of the planned trajectory are sensitive to grid resolution, and the resulting trajectories may be geometrically non-smooth or dynamically infeasible. Consequently, graph search is often paired with trajectory optimization for real-time, dynamically feasible planning~\citep{dolgov2010path, katrakazas2015real}. 

Learning-based methods have recently attracted increasing attention for trajectory planning in mixed traffic flow, because of their ability to handle complex nonlinear dynamics, uncertain interactions, and diverse driving behaviors. Typical learning-based approaches include imitation learning~\citep{le2022survey}, deep reinforcement learning~\citep{aradi_survey_2022}, and generative models~\citep{zheng2025diffusion}. Despite these advantages, learning-based methods lack interpretability and explicit theoretical guarantees, making it difficult to ensure that the generated trajectories always satisfy safety, feasibility, and traffic-rule constraints. Furthermore, they are highly data-intensive and struggle to generalize across varying environments, despite ongoing mitigation efforts~\citep{li_survey_2026}. Therefore, these limitations motivate the development of more structured and constraint-aware trajectory planning approaches.

Optimization-based methods formulate trajectory planning as a constrained optimization problem, in which the desired trajectory is obtained by minimizing a predefined objective function subject to a set of explicit constraints, such as vehicle dynamics, collision avoidance, and traffic rules. Compared with learning-based methods, their primary advantage lies in a clear mathematical structure, strong interpretability, and the ability to explicitly handle physical and safety constraints. However, optimization-based methods often face challenges in terms of computational complexity and in adequately capturing inter-vehicle interactions~\citep{teng_motion_2023}. To model the interactions between the ego vehicle and surrounding vehicles, many game theory-based trajectory planning methods have been proposed~\citep{wang_modeling_2022, yang_macro-micro_2024, yang_macro-micro_2025}. Game theory models are inherently capable of capturing interactive decision-making among traffic participants, which is particularly important in mixed traffic environments. Moreover, existing optimization-based approaches, including both model predictive control~\citep{micheli_nmpc_2023} and game-theoretic methods~\citep{ji_pricing_2023}, rely on simplified analytical safety indicators or rule-based interaction models derived from car-following principles. Although such simplifications ensure computational efficiency, they may lead to discrepancies between the predicted and actual safety costs under dynamic interactions. To address this gap, in this work we integrate a neural network (NN) model into an optimization-based trajectory planning framework. The proposed model captures interactions among vehicles and predicts potential safety costs, thereby enabling the planner to avoid overly simplified vehicle prediction models while preserving the feasibility of real-time computation.

The manner in which NN models are integrated into optimization depends critically on whether the decision variables of the optimization problem are included as input features to the trained model. To avoid ambiguity with control theory terminology, this work subsequently uses the term \textit{inputs} to refer exclusively to the features fed into the network. When the model inputs are independent of the decision variables, the resulting approach follows the standard ``predict-then-optimize'' paradigm~\citep{elmachtoub_smart_2022}, in which prediction and optimization are performed sequentially. This work instead considers the paradigm in which the model inputs include the decision variables, so that the prediction and the optimization become coupled. Such approaches are increasingly adopted in settings where first-principles models are either computationally expensive or insufficiently expressive, and have been applied in domains including engine design~\citep{yang_modeling_2022}, energy systems~\citep{chen_data-driven_2020, murzakhanov_neural_2022, wang_real-time_2025}, process systems engineering~\citep{yang_data-driven_2021,wang_fast_2024,liu_simulator-based_2025}, and manufacturing systems~\citep{dvorkin_emission-constrained_2023}.

Surrogate models adopt simpler data-driven representations to approximate complex or computationally demanding relationships, rendering them easier to incorporate into optimization.
Embedding such a trained NN into an optimization problem can be classified as black-box, gray-box, or white-box embedding, depending on the extent to which information about the NN model is exploited~\citep{dowson_mathoptaijl_2026}. Black-box embedding typically relies solely on the forward pass of the surrogate model and does not require its architecture, weights, or gradients. The resulting problem is commonly solved using derivative-free optimization algorithms, such as Bayesian optimization, which are particularly suitable when the underlying mapping is difficult to express analytically or when evaluations involve computationally expensive simulations or experiments. Such methods impose no specific restrictions on the types of decision variables or on the continuity and differentiability of the surrogate model. However, because the internal structure of the surrogate is not explicitly exploited, they generally do not provide the same deterministic global optimality guarantees for the surrogate-based optimization problem as structure-exploiting mathematical programming formulations~\citep{wang2023recent}. Gray-box embedding exploits accurate first- and second-order derivative information obtained through backpropagation and is therefore mainly applicable to continuous decision variables~\citep{elorza_casas_comparison_2025}. This approach can handle large-scale and complex NN models~\citep{lawrynczuk_input_2022, wang_real-time_2025}. However, the resulting optimization problem may converge to a local optimum and may be affected by vanishing or exploding gradients. 

Distinct from both approaches, white-box embedding fully exploits the NN model architecture by explicitly representing the forward propagation process through additional constraints incorporated into the original optimization problem. This approach supports both continuous and discrete decision variables and can provide global optimality guarantees under suitable formulations. It also offers greater flexibility in the solution process because the embedded constraints are inherently supported by existing solvers. The dominant white-box embedding approach relies on NNs with Rectified Linear Unit (ReLU) activations~\citep{huchette_when_2026}, whose piecewise-linear structure admits an exact 0--1 mixed-integer programming reformulation~\citep{fischetti_deep_2018}. Notably, the resulting MIP is NP-hard to solve, and becomes computationally intensive as the network grows. Although several acceleration techniques, including bound tightening, model compression, warm starts, and model regularization~\citep{yang_modeling_2022, wang_optimizing_2023, kenefake_multiparametric_2024, fajemisin_optimization_2024}, have been proposed, the introduction of integer variables cannot be completely avoided. This computational limitation indicates that the MIP formulation may not satisfy the real-time requirements of the trajectory planning problem in this work.

To overcome the computational bottleneck introduced by integer variables and guarantee global optimality, the input convex neural network (ICNN)~\citep{amos_input_2017} and its capability to obtain an LP formulation offer an appealing alternative when the underlying phenomenon is approximately convex. In fact, an ICNN ensures convexity with respect to its inputs by imposing non-negativity constraints on specific weights and using convex, non-decreasing activation functions. The representation capability of ICNNs is theoretically guaranteed, in that any Lipschitz continuous convex function on a compact domain can be approximated by an ICNN~\citep{chen_optimal_2019}. 
The efficiency and scalability of this specific LP reformulation substantially broaden the applicability of ICNN-based models~\citep{dvorkin_emission-constrained_2023, wu_transient_2024, liu_icnn-enhanced_2026}. Moreover, the partially input convex neural network (PICNN) introduces auxiliary variables without imposing convexity constraints, allowing the model to capture more complex input interactions while preserving convexity with respect to the decision variables. The enhanced flexibility can lead to more accurate predictions and better downstream performance compared with the more generic ICNN models, as demonstrated by~\citet{lawrynczuk_input_2022}. Given their competitive representational capacity and excellent solution efficiency when embedded into optimization problems, ICNN-based models are promising candidates for real-time trajectory planning.

Nevertheless, extending this promise to demanding tasks such as LC planning presents challenges that remain unresolved. Existing studies typically employ a single ICNN class that suffices for the system mappings they approximate, leaving the capability of ICNN-based models in more complex settings unexamined. The safety cost in LC planning illustrates such complexity, as it depends jointly on static features and dynamic trajectory sequences, which no single ICNN architecture accommodates naturally. Moreover, while recurrent extensions of ICNNs have been developed for sequential data~\citep{chen_optimal_2019, wang_real-time_2025}, their LP reformulation, along with that of surrogates composing multiple ICNN-class modules, has not been established. Bridging these gaps calls for a surrogate design that composes ICNN-class architectures to match the structure of the planning problem, together with a formal validation of the resulting LP reformulation. A preliminary version of this work~\citep{yang_itsc_2026} is extended here by theoretically establishing proofs about the validity of the proposed reformulation, presenting and comparing alternative surrogate architectures, and further strengthening the empirical evaluation using real-world trajectory data. 
Specifically, we make the following contributions:
\begin{enumerate}
    \item A data-driven, optimization-compatible framework is developed for LC trajectory planning in mixed traffic. The framework embeds NN surrogate models into an optimization-based trajectory planner to approximate computationally demanding terms. By leveraging ICNN-class models, the underlying computationally intractable trajectory planning problem is reformulated as a tractable convex optimization problem, without introducing additional binary variables.
    \item The exactness of the LP reformulation for ICNN-class models under minimization is formally established through both theoretical analysis and numerical validation. The results are further extended to objective functions containing additional analytical terms and to optimization problems involving multiple ICNN surrogates.
    \item Two architectural adaptations of ICNN-class models are proposed. A many-to-one input convex recurrent neural network (ICRNN), building on~\citet{chen_optimal_2019}, is tailored to sequence-to-scalar prediction with a streamlined structure. A PICNN variant replaces the multiplicative gating of the original architecture~\citep{amos_input_2017} with additive coupling and is empirically compared with the gated original.
    \item A structured safety surrogate is designed for optimization-based trajectory planning. The proposed model combines a PICNN with ICRNN and fully input convex neural network (FICNN) modules to integrate historical traffic information with planned trajectory features, and admits a valid LP reformulation, while maintaining competitive predictive accuracy and optimization tractability.
\end{enumerate}

The remainder of this paper is organized as follows. Section~\ref{sec:problem} formulates the LC trajectory planning problem and identifies the two objective components intractable for real-time implementation. Section~\ref{sec:framework} develops the ICNN-class surrogate-embedded optimization framework by establishing the exactness conditions for its epigraph reformulation and presenting the structured safety and lateral-comfort surrogates, as well as the resulting reformulated planning problem. Section~\ref{sec:experiments} evaluates the proposed approach in terms of surrogate prediction accuracy, reformulation efficiency, and closed-loop planning performance. Section~\ref{sec:conclusion} summarizes the main findings and discusses directions for further research.

\section{Lane-changing trajectory planning problem} \label{sec:problem}

In the mixed traffic environment, sketched in Figure~\ref{fig:game}, the ego vehicle represents the CAV executing an LC maneuver, while the corresponding obstacle vehicle refers to the following vehicle in the target lane. The LC trajectory planning is complicated, particularly when the obstacle vehicle is an HDV, since explicit cooperation cannot be assumed and the interaction is thus to be treated as stochastic. To model this, we leverage empirical trajectory data in conjunction with simulation data to model vehicle interactions, rather than relying solely on rule-based or car-following models. 

\begin{figure}[tb]
    \centering
    \includegraphics[width=0.65\textwidth]{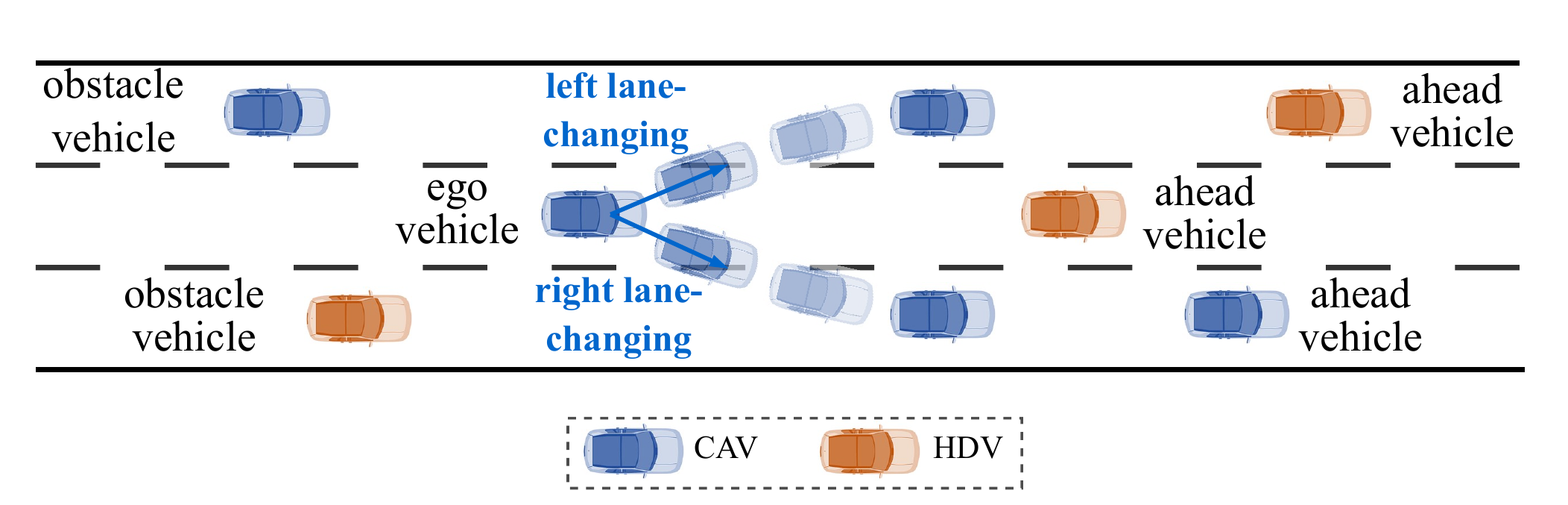}
    \caption{Schematic diagram of LC trajectory planning in mixed traffic environment, reproduced from~\citet{yang_macro-micro_2025}.}
    \label{fig:game}
\end{figure}

\subsection{Trajectory parameterization and affine representation}

In this study, the ego vehicle's trajectory during the LC process is decomposed into longitudinal and lateral trajectories, both of which are modeled through parameterized curves. Specifically, Bézier curves are employed, the applicability and advantages of which are discussed by~\citet{yang_macro-micro_2025}. Besides enforcing boundary and physical constraints through a finite number of control points, this parameterization exposes affine mappings from the control points to the discrete position, speed, and acceleration profiles. 
The decision variables for the trajectory optimization problem comprise the longitudinal and lateral trajectory control points~$(P^{\mathrm{x}}, P^{\mathrm{y}})$, as well as the LC maneuver duration~$\delta$.

\subsubsection{Longitudinal trajectory}
The longitudinal speed profile of the ego vehicle is designed based on Bézier curves of degree~$d$ as follows:
\begin{subequations}
\begin{align}
    v^{\mathrm{x}}(t) & =\sum_{i=0}^{d}{b_{i,d}\left(\frac{t}{T}\right)} P^{\mathrm{x}}_i, \qquad   0 \leq t \leq T \\
    b_{i,d}(x) & = \binom{d}{i} x^i(1-x)^{d-i},
\end{align}
\end{subequations}
where~$v^{\mathrm{x}}(t)$ denotes the longitudinal speed at time~$t$, $T$ is the length of the planning horizon, and $P^{\mathrm{x}}$ is the vector of control points. Our approach leverages the convex hull property of Bézier curves to transform the continuous consistency, safety, and physical constraints of the entire LC process into a finite set of constraints imposed on the control points:
\begin{subequations}
   \begin{align}
    & P^{\mathrm{x}}_0 = v_0^{\mathrm{x}}, \quad P^{\mathrm{x}}_d = v^{\mathrm{x}}(T),  \label{eq:vx1} \\
    & \frac{d}{T} (P^{\mathrm{x}}_1-P^{\mathrm{x}}_0) =  a_0^{\mathrm{x}}, \quad \frac{d}{T} (P^{\mathrm{x}}_d - P^{\mathrm{x}}_{d-1}) = 0, \label{eq:vx2} \\
    & 0 \leq P^{\mathrm{x}}_i \leq v_{\mathrm{max}}^{\mathrm{x}}, \ && i = 1,\ldots,d, \label{eq:vx3} \\
    & a_{\text{min}}^{\mathrm{x}} \leq \frac{d}{T}  (P^{\mathrm{x}}_{i+1}-P^{\mathrm{x}}_i) \leq a_{\text{max}}^{\mathrm{x}}, \ && i=1,\ldots,d-2. \label{eq:vx4}
    \end{align} 
\end{subequations}
Eq.~\eqref{eq:vx1} guarantees the speed consistency at the beginning and reaches the desired speed at the end of the LC process, where~$v^{\textrm{x}}_0$ and~$v^{\textrm{x}}(T)$ denote the initial and target longitudinal speeds, respectively; Eq.~\eqref{eq:vx2} ensures that the longitudinal acceleration at the start is~$a^{\textrm{x}}_0$ and reaches~0 at the end, in which~$a^{\textrm{x}}_0$ denotes the initial longitudinal acceleration; Eqs.~\eqref{eq:vx3} and~\eqref{eq:vx4} ensure that both the speed and the acceleration are maintained in the feasible range during the LC process. These constraints guarantee that the generated trajectories comply with physical and safety limitations, regardless of surrounding vehicles. The longitudinal position profile and acceleration profile can then be obtained by integrating and differentiating the speed profile, respectively. To this end, let the longitudinal trajectory be represented by the position and speed vectors~$p^{\mathrm{x}}, v^{\mathrm{x}} \in \mathbb{R}^{N}$, where~$N$ corresponds to the total number of discrete time steps of length~$\Delta t$ in the entire LC process~$T$. The degree-$d$ generic polynomial curve is given as follows:
\begin{equation}
    v^{\mathrm{x}}(t) = \alpha_0 + \alpha_1  \frac{t}{T} + \alpha_2  \Bigl(\frac{t}{T}\Bigr)^2+ \dots + \alpha_d  \Bigl(\frac{t}{T}\Bigr)^d, \quad 0 \leq t \leq T.
\end{equation}

The Bézier control points~$P$ can be mapped to the coefficient vector~$\alpha \in \mathbb{R}^{d+1}$ of a general polynomial curve via a basis transformation matrix~$M \in \mathbb{R}^{(d+1) \times (d+1)}$. This transformation represents a change of basis from the Bernstein polynomials to the monomial basis, where the entries of the transformation matrix~$M$ are uniquely determined by the degree~$d$~\citep{farin2014curves}. The transformation can be expressed as:
\begin{subequations}
    \begin{align}
        & (\alpha_0, \alpha_1, \ldots, \alpha_d)^{\top} = M(P^{\mathrm{x}}_0, P^{\mathrm{x}}_1, \ldots, P^{\mathrm{x}}_d)^\top, \\
        & M_{i,j} =
        \begin{cases}
        \binom{d}{i} \binom{i}{j} (-1)^{i - j}, & \text{if } i \ge j
        \\0, & \text{otherwise,}
        \end{cases} \quad \text{for } 0 \le i,j \le d. 
    \end{align}
\end{subequations}

Consequently, with a specific Vandermonde matrix~$V \in \mathbb{R}^{N \times (d+1)}$, the speed vector can be sampled with the time step~$\Delta t$ from polynomial curves as~\citep{golub2013matrix}:
\begin{subequations}
\begin{align}
    v^{\mathrm{x}} & = VM(P^{\mathrm{x}}_0, P^{\mathrm{x}}_1,\ldots,P^{\mathrm{x}}_d)^\top, \label{eq:vx} \\
    V & = V(\frac{\Delta t}{T},\frac{2 \Delta t}{T},\ldots,1)= \left[ \begin{matrix}
    	1&		\frac{\Delta t}{T}&		\left( \frac{\Delta t}{T} \right) ^2&		\cdots&		\left( \frac{\Delta t}{T} \right) ^d\\
    	1&		\frac{2\Delta t}{T}&		\left( \frac{2\Delta t}{T} \right) ^2&		\cdots&		\left( \frac{2\Delta t}{T} \right) ^d\\
    	\vdots&		\vdots&		\vdots&		\ddots&		\vdots\\
    	1&		1&		1^2&		\cdots&		1^d\\
    \end{matrix} \right]. \label{eq:vander}
\end{align}
\end{subequations}
Hence, the position vector~$p^{\mathrm{x}}$ can be obtained with an integration matrix~$H \in \mathbb{R}^{N \times N}$:
\begin{subequations}
\begin{align}
    p^{\mathrm{x}} & =  \mathbf{1} \, p^{\mathrm{x}}_0 + H V M (P^{\mathrm{x}}_0, P^{\mathrm{x}}_1,\ldots,P^{\mathrm{x}}_d)^\top \label{eq:px},  \\
    H_{i,j} & = \begin{cases} 
                \Delta t, & \text{if } i \ge j \\
                0, & \text{otherwise,}
                \end{cases}
\end{align}
\end{subequations}
in which,~$p^{\mathrm{x}}_0$ is the initial longitudinal position of the ego vehicle at the beginning of its LC maneuver, and $\mathbf{1}\in\mathbb{R}^N$ is the all-ones vector. With the first-order difference matrix~$D \in \mathbb{R}^{(N-1) \times N}$, the longitudinal acceleration profile can be formulated as:
\begin{subequations}
\begin{align}
    a^{\mathrm{x}} &= D V M (P^{\mathrm{x}}_0, P^{\mathrm{x}}_1,\ldots,P^{\mathrm{x}}_d)^\top, \label{eq:ax} \\
    D_{i,j} &= \frac{1}{\Delta t} \times
    \begin{cases}
        1, \quad & \mathrm{if} \quad j = i+1 \\
        -1, \quad & \mathrm{if} \quad j = i \\
        0, \quad & \mathrm{otherwise.}
    \end{cases}
\end{align}
\end{subequations}

As demonstrated in Eqs.~\eqref{eq:vx},~\eqref{eq:px}, and~\eqref{eq:ax}, the longitudinal profile of the ego vehicle can be effectively represented by control points $P$ through matrix transformations, revealing the affine relationship between the discrete trajectory and control points.

\subsubsection{Lateral trajectory}
By leveraging the endpoint interpolation property of Bézier curves, wherein the curve begins at the first control point and ends at the last, Bézier curves are utilized to model lateral position profiles as well. This approach ensures that the ego vehicle can complete LC maneuvers with precise boundary alignment. Consequently, the lateral position profile of the ego vehicle is jointly determined by the LC maneuver duration~$\delta$ and the Bézier curve control point set~$P^{\mathrm{y}}$. Consistent with the longitudinal speed profile, the lateral position profile is defined as:
\begin{equation}    
    p^{\mathrm{y}}\left( t \right) =
    \begin{cases}	
        p^{\textrm{y}}_0 + W \displaystyle\sum_{i=0}^d{b_{i,d}\left( \frac{t}{\delta} \right) P^{\textrm{y}}_i}, & 0 \le t \le \delta\\	
        p^{\textrm{y}}_0 + W, & \delta < t\leq T,\\
    \end{cases}
\end{equation}
subject to:
\begin{subequations}
\begin{align}
    & P^{\mathrm{y}}_0=0, \ P^{\mathrm{y}}_d=1, \label{eq:py1} \\
    & \frac{dW}{\delta} (P^{\mathrm{y}}_1-P^{\mathrm{y}}_0) = 0, \ \frac{dW}{\delta} (P^{\mathrm{y}}_{d}-P^{\mathrm{y}}_{d-1}) = 0, \label{eq:py2} \\
    & 0 \leq \frac{dW}{\delta} (P^{\mathrm{y}}_{i+1}-P^{\mathrm{y}}_i) \leq v^y_{\text{max}}, & \quad i = 1,\ldots,d-2, \label{eq:py3} \\
    & d (d-1) \frac{W}{\delta^2} (P^{\mathrm{y}}_2 - 2P^{\mathrm{y}}_1 + P^{\mathrm{y}}_0)=0, \label{eq:py4} \\
    & d (d-1) \frac{W}{\delta^2} (P^{\mathrm{y}}_d - 2P^{\mathrm{y}}_{d-1} + P^{\mathrm{y}}_{d-2})=0, \label{eq:py5} \\
    & a^{\mathrm{y}}_{\text{min}} \leq d (d-1) \frac{W}{\delta^2} (P^{\mathrm{y}}_{i+2} - 2P^{\mathrm{y}}_{i+1} + P^{\mathrm{y}}_i) \leq a^{\mathrm{y}}_{\text{max}}, & \quad i=1,\ldots,d-3, \label{eq:py6} \\
    & 0 \leq P^{\mathrm{y}}_i \leq 1, & \quad i = 1,\ldots,d-1, \label{eq:py7} \\
    & \delta_{\mathrm{min}} \le \delta \le T, \label{eq:py8}
\end{align}
\end{subequations}
where~$p^{\textrm{y}}_0$ and~$W$ denote the initial lateral position and the lane width, respectively. Eq.~\eqref{eq:py1} guarantees the consistency of the lateral trajectory and forces the vehicle to reach the middle of the target lane at the end of the LC process; Eq.~\eqref{eq:py2} ensures that the initial and terminal lateral speeds are both 0, avoiding consecutive LC maneuvers; Eqs.~\eqref{eq:py3} and~\eqref{eq:py6} ensure that the lateral speed and acceleration during the LC process are maintained within feasible ranges; Eqs.~\eqref{eq:py4} and~\eqref{eq:py5} impose that the initial and terminal lateral acceleration is 0; Eq.~\eqref{eq:py8} is set to guarantee that the LC maneuvers can be executed within~$T$ and larger than the minimum safe value~$\delta_{\mathrm{min}}$. Additionally, the control points of the lateral and longitudinal trajectories of the same ego vehicle are not necessarily identical. More details about constraints can be found in the work by~\citet{yang_macro-micro_2025}. Following the same sampling procedure as in Eq.~\eqref{eq:vx}, the lateral position profile can be expressed as: 
\begin{subequations}
\begin{align}
    p^{\mathrm{y}}\left( t \right) & =
    \begin{cases}	
        p^{\textrm{y}}_0 + W V(\delta) M (P^{\mathrm{y}}_0, P^{\mathrm{y}}_1,\ldots,P^{\mathrm{y}}_d)^\top, & 0 \le t \le \delta\\	
        p^{\textrm{y}}_0 + W, & \delta < t\leq T,
    \end{cases} \\
    V(\delta) & = V(\frac{\Delta t}{\delta},\frac{2 \Delta t}{\delta},\ldots,1). \label{eq:vandelta}
\end{align}
\end{subequations}
The size of matrix~$V(\delta) \in \mathbb{R}^{\frac{\delta}{\Delta t} \times (d+1)}$ is determined by the degree of Bézier curves and the LC maneuver duration. Let~$D^{(2)} \in \mathbb{R}^{(\frac{\delta}{\Delta t} -2) \times \frac{\delta}{\Delta t}    }$ denote the second-order difference matrix. The lateral acceleration profile of the ego vehicle can then be derived by differentiating twice the lateral position profile. Finally, the lateral speed and acceleration profile can be expressed as:
\begin{subequations}
\begin{align}
    a^{\mathrm{y}}\left( t \right) & =
    \begin{cases}	
        W D^{(2)} V(\delta) M (P^{\mathrm{y}}_0, P^{\mathrm{y}}_1,\ldots,P^{\mathrm{y}}_d)^\top, & 0 \le t \le \delta\\	
        0, & \delta < t\leq T
    \end{cases} \label{eq:ay} \\
    D^{(2)}_{i,j} & = \frac{1}{(\Delta t)^2} \times
    \begin{cases}
        1, \quad & \mathrm{if} \quad j = i \\
        -2, \quad & \mathrm{if} \quad j = i+1 \\
        1, \quad & \mathrm{if} \quad j = i+2 \\
        0, \quad & \mathrm{otherwise.}
    \end{cases} \label{eq:second_order_D}
\end{align}
\end{subequations}

\subsection{Objective function components and computational bottlenecks} \label{sec:objective_components}
In the context of optimization-based trajectory planning, the objective function typically comprises three metrics, namely driving safety, ride comfort, and travel efficiency, whose positively weighted sum is minimized~\citep{yang_macro-micro_2025}. The specific definitions for each of these cost terms are provided below.

\textrm{(i)} Safety cost~$C_{\text{safety}}$:~\citet{wang_review_2021} divided the safety measures commonly used in the CAV safety modeling into three categories: time-based, deceleration-based, and energy-based. Although the fundamentals of these indices differ, they share a common implementation challenge related to incorporating them effectively into optimization-based trajectory planning problems. In this work, the safety cost predicted for the entire LC process will be approximated by a surrogate model built from ICNN-class modules. 

The surrogate model predicts the safety cost based on static features, historical trajectories, and the planned LC trajectory of the ego vehicle. The historical trajectories include the longitudinal positions and speeds of the ego vehicle, the obstacle vehicle, and the ahead vehicles in both the initial and target lanes during a fixed period before the LC maneuver begins. As noted by~\citet{wang_review_2021}, Time Exposed Time-to-Collision (TET) and Time Integrated Time-to-Collision (TIT) are commonly utilized in the CAV longitudinal trajectory optimization objectives as safety indicators. Compared to instantaneous Time-to-Collision (TTC) alone, these safety measures provide an aggregated risk over the entire lane change process, rendering them more appropriate for microscopic simulation and this work. Other energy- and deceleration-based metrics cannot be readily computed due to the absence of mass and deceleration capacity parameters in the trajectory dataset. Hence, the TET and TIT are adopted to quantify the safety cost between vehicle~$i$ and its front vehicle~$j$, which can be formulated at each time instant~$t$ of the LC process as follows:
\begin{subequations}
    \begin{align}
        & \mathrm{TTC}(i,j,t) = \frac{p^{\textrm{x}}_j(t) - p^{\textrm{x}}_i(t) - l_j}{v^{\textrm{x}}_i (t) -v^{\textrm{x}}_j ( t)},  \\
        &\mathrm{TET}(i,j,t) = \begin{cases}
        1, \quad 0 < \mathrm{TTC}(i,j,t) < \mathrm{TTC}^\star, \\
        0, \quad \mathrm{otherwise}, \end{cases}  \label{eq:tet} \\
        & \mathrm{TIT}(i,j,t) = \frac{1}{\mathrm{TTC}(i,j,t)} - \frac{1}{\mathrm{TTC}^\star}, \quad
        0 < \mathrm{TTC}(i,j,t) < \mathrm{TTC}^\star, \label{eq:tit}
    \end{align}
\end{subequations}
where~$p^{\textrm{x}}_i$,~$p^{\textrm{x}}_j$,~$v^{\textrm{x}}_i$ and~$v^{\textrm{x}}_j$ are the longitudinal positions and speed of vehicles~$i$ and~$j$, respectively;~$l_j$ denotes the length of the front vehicle~$j$; and~$\mathrm{TTC}^\star$ denotes the TTC threshold value. For the ego vehicle~$i$, its safety cost is jointly determined by the obstacle vehicle~$j$, and ahead vehicles in both initial lane~$i^{\mathrm{ahead}}$ and target lane~$j^{\mathrm{ahead}}$ shown in Figure~\ref{fig:game}. Additionally, the safety cost is also affected by the lateral positions of vehicles. Specifically, the ahead vehicle in the initial lane~$i^{\mathrm{ahead}}$ would not pose any safety cost after the completion of the ego vehicle's LC maneuver. Therefore, following previous work~\citep{yang_macro-micro_2025}, the safety cost formulation is defined as:%
\begin{equation} \label{eq:safety}
    C_\mathrm{safety} = \sum_{n=1}^{\frac{\delta}{\Delta t}} f_{\mathrm{safety}}(i,i^{\mathrm{ahead}} ,n \Delta t)  + 
     \sum_{n=1}^{N} \frac{1}{2} \left( f_{\mathrm{safety}}(j,i,n \Delta t) + f_{\mathrm{safety}}(i,j^{\mathrm{ahead}} ,n \Delta t) \right),
\end{equation}
where~$f_{\mathrm{safety}}$ represents the safety measurement function, which is quantified in this study using TET or TIT. A weighting factor of~$\frac{1}{2}$ is applied to vehicles in the target lane, ensuring the numerical balance between the initial and target lane safety considerations. Regardless of whether TET or TIT is selected, the resulting safety function introduces significant computational complexity, rendering direct inclusion in the trajectory optimization problem intractable for real-time applications. Approximating $C_\mathrm{safety}$ with a trained FICNN surrogate replaces this analytically cumbersome term with an explicit, optimization-compatible representation, making real-time trajectory optimization feasible.

\textrm{(ii)} Comfort cost $C_{\text{com}}$: Ride comfort reflects the smoothness of the ego vehicle's motion during the LC maneuver and is evaluated based on both longitudinal and lateral accelerations:
\begin{subequations}\label{eq:comfort}
\begin{align}
    C_{\mathrm{com}}
    &= C_{\mathrm{lon}}(P^{\mathrm{x}})+C_{\mathrm{lat}}(P^{\mathrm{y}},\delta), \label{eq:comfort:decomposition}\\
    C_{\mathrm{lon}}(P^{\mathrm{x}})
    &= \sum_{n=1}^{N} a^{\mathrm{x}}(n\Delta t)^2, \label{eq:comfort:longitudinal}\\
    C_{\mathrm{lat}}(P^{\mathrm{y}},\delta)
    &= \sum_{n=1}^{\frac{\delta}{\Delta t}} a^{\mathrm{y}}(n\Delta t)^2, \label{eq:comfort:lateral}
\end{align}
\end{subequations}
in which,~$C_{\mathrm{lon}}$ and~$C_{\mathrm{lat}}$ denote the cumulative longitudinal and lateral comfort costs, respectively. Both terms are formulated as convex quadratic expressions to ensure computational efficiency. However, the inclusion of the LC maneuver duration~$\delta$ as a decision variable introduces a structural challenge to the lateral comfort cost~$C_{\mathrm{lat}}$. Specifically, the dimensionalities of the matrices in Eqs.~\eqref{eq:vandelta} and~\eqref{eq:second_order_D} are dependent on this decision variable, complicating the implementation with standard fixed-size optimization solvers.

Notably, the lateral comfort cost~$C_{\mathrm{lat}}$ depends only on the LC duration~$\delta$ and lateral trajectory control points~$P^{\mathrm{y}}$. Since~$P^{\mathrm{y}}$ appears in neither the remaining objective components nor constraints, it can be eliminated exactly by partial minimization. Accordingly, the optimal lateral comfort cost function is defined as follows:
\begin{equation}\label{eq:lateral_value_function}
    C_{\mathrm{lat}}^{\star}(\delta)
    := \min_{P^{\mathrm{y}}}\left\{
    C_{\mathrm{lat}}(P^{\mathrm{y}},\delta)
    \;\middle|\;
    P^{\mathrm{y}}\in\mathcal{P}_{\mathrm{y}}(\delta)
    \right\},
\end{equation}
where $C_{\mathrm{lat}}^{\star}(\delta)$ is the minimum achievable lateral comfort cost for an LC maneuver lasting~$\delta$; and~$\mathcal{P}_{\mathrm{y}}(\delta)$ is the set of lateral trajectories satisfying the boundary, speed, and acceleration constraints. The resulting optimal cost~$C_{\mathrm{lat}}^{\star}(\delta)$ can therefore be regarded as a function of~$\delta$ alone, which scales proportionally to~$\frac{1}{\delta^3}$ when minimizing the sum of squared lateral accelerations~\citep{werling_optimal_2010}. This property motivates the substitution of~$C_{\mathrm{lat}}$ in Eq.~\eqref{eq:comfort} with a surrogate model, similar to the one used for the safety metric. The specific architectures of both surrogate models are detailed in Section~\ref{sec:surrogate}. 

\textrm{(iii)} Efficiency cost $C_{\text{eff}}$: The change in travel distance caused by the speed adjustment during the entire interval is used to assess the degree of speed fluctuation. Hence, the efficiency measurement function is expressed as:
\begin{equation} \label{eq:efficiency}
    C_{\text{eff}} = \sum_{n = 1}^{N}{(v^{\mathrm{x}}_0 - v^{\mathrm{x}}(n \Delta t)) \Delta t},
\end{equation}
which is an affine function of the longitudinal trajectory control points~$P^{\mathrm{x}}$. This linear relationship ensures that this term is computationally straightforward to implement within the optimization framework.

\subsection{Surrogate-free trajectory optimization problem} \label{sec:original_problem}
We first formulate the trajectory optimization problem without any learned component, aiming to minimize the weighted sum of safety, comfort, and efficiency costs subject to the physical trajectory constraints:%
\begin{subequations}\label{eq:original_planning}
\begin{align}
    \min_{P^{\mathrm{x}},P^{\mathrm{y}},\delta}\quad
    & w_{\mathrm{s}} C_{\mathrm{safety}}(P^{\mathrm{x}},\delta)
      + w_{\mathrm{c}}\left[C_{\mathrm{lon}}(P^{\mathrm{x}})
      +C_{\mathrm{lat}}(P^{\mathrm{y}},\delta)\right]
      + w_{\mathrm{e}} C_{\mathrm{eff}}(P^{\mathrm{x}}) \label{eq:original_planning:obj}\\
    \mathrm{s.t.}\quad
    & P^{\mathrm{x}}\in\mathcal{P}_{\mathrm{x}}, \label{eq:original_planning:px}\\
    & P^{\mathrm{y}}\in\mathcal{P}_{\mathrm{y}}(\delta),\qquad
      \delta_{\min}\leq\delta\leq T, \label{eq:original_planning:py}
\end{align}
\end{subequations}
in which, the positive coefficients~$w_{\mathrm{s}},w_{\mathrm{c}},w_{\mathrm{e}}$ denote weights assigned to safety, comfort, and efficiency costs. The sets~$\mathcal{P}_{\mathrm{x}}$ and~$\mathcal{P}_{\mathrm{y}}(\delta)$ contain longitudinal and lateral control points satisfying defined constraints, respectively.

For any LC duration~$\delta$, the planner will choose the feasible lateral control points that give the smallest lateral discomfort, because $P^{\mathrm{y}}$ appears only in the lateral comfort term and its associated feasibility conditions. The same planning decision can therefore be expressed using the optimal lateral comfort~$C_{\mathrm{lat}}^{\star}(\delta)$:
\begin{subequations}\label{eq:reduced_original_planning}
\begin{align}
    \min_{P^{\mathrm{x}},\delta}\quad
    & w_{\mathrm{s}} C_{\mathrm{safety}}(P^{\mathrm{x}},\delta)
      + w_{\mathrm{c}}\left[C_{\mathrm{lon}}(P^{\mathrm{x}})
      +C_{\mathrm{lat}}^{\star}(\delta)\right]
      + w_{\mathrm{e}} C_{\mathrm{eff}}(P^{\mathrm{x}}) \label{eq:reduced_original_planning:obj}\\
    \mathrm{s.t.}\quad
    & P^{\mathrm{x}}\in\mathcal{P}_{\mathrm{x}},\qquad
      \delta_{\min}\leq\delta\leq T. \label{eq:reduced_original_planning:domain}
\end{align}
\end{subequations}

Although the problem~in~Eq.~\eqref{eq:reduced_original_planning} is a simplification of that in~Eq.~\eqref{eq:original_planning}, two objective components still lack representations suitable for real-time solution, since the aggregate safety cost~$C_{\mathrm{safety}}$ is determined by complex interactions between the planned ego trajectory and the observed traffic context, while $C_{\mathrm{lat}}^{\star}$ is the value of the duration-dependent lateral trajectory problem in Eq.~\eqref{eq:lateral_value_function}. Evaluating either term directly for every candidate $(P^{\mathrm{x}},\delta)$ would undermine the computational efficiency of the planner. We therefore design the surrogates $\hat C_{\mathrm{safety}}$ and $\hat C_{\mathrm{lat}}^{\star}$, which are trained offline from trajectory and optimization data, so that they can be embedded explicitly into Eq.~\eqref{eq:reduced_original_planning}.

\section{ICNN-class surrogate-embedded optimization framework} \label{sec:framework}

This section presents the ICNN-class surrogate models and their explicit embedding into the trajectory planning problem. We first introduce the MIP formulation of a conventional ReLU MLP and the LP formulation of FICNNs. The exactness of the LP representation is subsequently established, followed by the structured safety surrogate, the lateral comfort surrogate, and the final surrogate-embedded planning formulation. 

\subsection{Embedding neural network surrogates in optimization}

To introduce the neural network embedding formulations, consider a generic optimization problem with a single objective function:
\begin{subequations}\label{eq:generic_optimization_problem}
\begin{align}
    \min_{x}\quad & f(x)\\
    \mathrm{s.t.}\quad & x\in\mathcal{X},
\end{align}
\end{subequations}
where $x$ is the decision vector, $\mathcal{X}$ is the feasible region defined by the original problem constraints, and $f(x)$ is an objective function that is unavailable or unsuitable for direct optimization. When $f(x)$ is approximated by a trained neural network surrogate~$\hat f(x)$, the surrogate replaces $f(x)$ in the objective while the original feasible region~$\mathcal{X}$ is retained. Embedding the surrogate then requires its forward pass to be expressed explicitly through additional constraints. For clarity, this subsection considers the minimization problem with one surrogate output. 

\subsubsection{Conventional MIP reformulation}

For an MLP with~$L$ layers, consisting of~$L-1$ hidden layers and one output layer:
\begin{subequations}
\begin{align}
h^{(l)} & = \sigma(W^{(l)}h^{(l-1)} + b^{(l)}), \quad l = 1,\ldots, L-1 \\
h^{(L)} & = W^{(L)}h^{(L-1)} + b^{(L)},
\end{align}
\end{subequations}
where~$h^{(0)}$ and~$h^{(l)}$ denote the input vector and the activation vectors of the $l$-th hidden layer, respectively. $W^{(l)}$ and~$b^{(l)}$ are the weights and biases of the~$l$-th layer, and $\sigma^{(l)}$ represents the activation function of the~$l$-th layer. With ReLU hidden layers, $h^{(l)}=\max\{0,W^{(l)}h^{(l-1)}+b^{(l)}\}$. Among various activation functions, ReLU is most frequently employed in the relevant studies due to its piecewise-linear structure that is amenable to reformulation through linear constraints, while maintaining competitive predictive accuracy~\citep{huchette_when_2026}. Accordingly, the standard big-$M$ representation is
\begin{subequations}\label{eq:fnn_mip}
\begin{align}
    \min \quad &\hat{f}(x) \\
    \mathrm{s.t.}\quad 
    &h^{(0)}=x, \\
    &h^{(l)}-s^{(l)}=W^{(l)} h^{(l-1)} + b^{(l)}, \quad && l=1,\ldots,L-1 \label{eq:fnn_mip:con1}\\
    &0\le h^{(l)}_{j}\le U^{(l)}_{j} z^{(l)}_{j},\quad && l=1,\ldots,L-1, \ j=1,\ldots,n_l \label{eq:fnn_mip:con2}\\
    &0\le s^{(l)}_{j}\le -L^{(l)}_{j} (1-z^{(l)}_{j}),\quad && l=1,\ldots,L-1, \ j=1,\ldots,n_l \label{eq:fnn_mip:con3}\\
    &z^{(l)}_{j}\in \{0,1\},\quad && l=1,\ldots,L-1, \ j=1,\ldots,n_l \label{eq:fnn_mip:con4}\\
    &\hat{f}(x)=W^{(L)} h^{(L-1)} + b^{(L)}, \label{eq:fnn_mip:con5}\\
    &L^{(L)}\le \hat{f}(x)\le U^{(L)}, \label{eq:fnn_mip:con6} \\
    & x \in \mathcal{X},
\end{align}
\end{subequations}
in which, variables~$s^{(l)}_{j}$ and~$z^{(l)}_{j}$ denote the slack and binary variables for the~$j$-th neuron in the~$l$-th layer. Constraints~\eqref{eq:fnn_mip:con1}--\eqref{eq:fnn_mip:con6} are introduced by reformulating the trained NN model through the big-$M$ method~\citep{fischetti_deep_2018, grimstad_relu_2019}; constraints~\eqref{eq:fnn_mip:con1}--\eqref{eq:fnn_mip:con4} encode the ReLU activations in the hidden layers; constraint~\eqref{eq:fnn_mip:con5} defines the affine output layer of the trained network; while constraint~\eqref{eq:fnn_mip:con6} imposes valid bounds on the network output. 

The constants~$L$ and~$U$ denote valid lower and upper bounds for the corresponding neuron values. The tightness of these bounds has a substantial effect on the strength of the LP relaxation and, consequently, on the computational performance of the MIP. Nevertheless, as discussed in Section~\ref{sec:intro}, acceleration techniques mitigate but do not eliminate the computational burden of the binary activation variables, motivating the use of ICNN-class models that admit reformulation without introducing integer variables.

\subsubsection{FICNN and LP formulation}
The forward pass of a fully input convex neural network (FICNN), as shown in Figure~\ref{fig:icnn}, can be formulated as:
\begin{subequations}\label{eq:icnn_activations}
    \begin{align}
        h^{(1)} & = \sigma(S^{(1)} h^{(0)} + b^{(1)}), \\
        h^{(l)} & = \sigma(W^{(l)} h^{(l-1)} + S^{(l)} h^{(0)} + b^{(l)}), \quad l = 2,\ldots,L-1 \\
        h^{(L)} & = W^{(L)} h^{(L-1)} + S^{(L)} h^{(0)} + b^{(L)}.
    \end{align}
\end{subequations}

\begin{figure}[tb]
    \centering
    \includegraphics[width=0.65\linewidth]{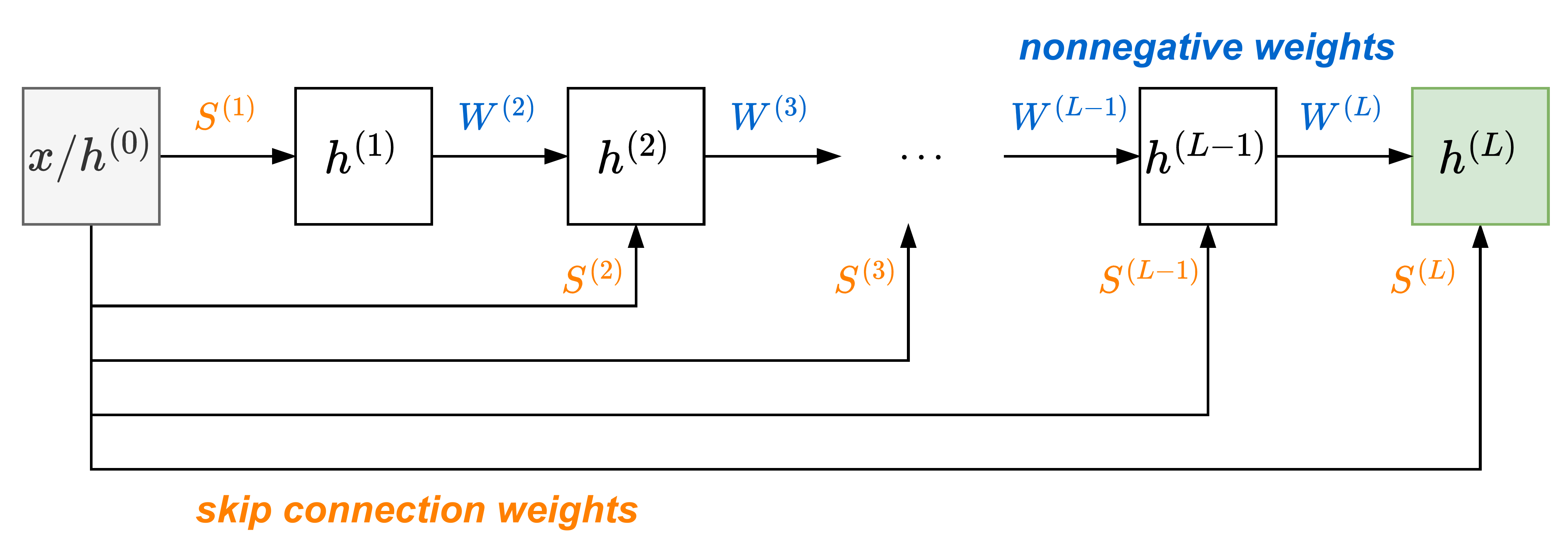}
    \caption{Architecture of a generic $L$-layer fully input convex neural network (FICNN).}
    \label{fig:icnn}
\end{figure}
The key architectural distinctions between FICNN and general MLP are as follows:

\textrm{(i)} Limitations on weights $W$ and activation functions $\sigma$: FICNNs guarantee convexity by requiring the inter-layer weights~$W$ to be non-negative and the activation functions~$\sigma$ to be convex and non-decreasing. These requirements arise from the composition rule for convex functions, where $h(x) = f(g(x))$ is convex in $x$ if $f$ is both convex and non-decreasing, and $g$ is convex. This ensures that each layer preserves the convexity of its input, thereby guaranteeing end-to-end convexity of the network output. Among the activation functions satisfying these criteria, ReLU is the most widely adopted in ICNN-class models (a survey of suitable alternatives is provided by~\citealp{lawrynczuk_input_2022}). It should be emphasized that these constraints constitute sufficient but not necessary conditions for convexity, with necessary and sufficient conditions for ReLU NNs established by~\citet{gagneux_convexity_2025}.

\textrm{(ii)} Skip connections with weights $S$: Direct skip connections from the input to each hidden layer are introduced to enhance the representational capacity of FICNN. Unlike the weights~$W$, the associated skip connection weights~$S$ are unconstrained, since the skip connections introduce only linear terms in inputs, which do not violate convexity.

By replacing the original objective function with the trained FICNN model with ReLU activations, the optimization problem can be converted into the following form via LP reformulation~\citep{amos_input_2017}:
\begin{subequations}\label{eq:icnn_lp}
\begin{align}
    \text{min} &\quad \hat{f}(x)\\
    \text{s.t.} 
    &\quad h^{(0)} =x, \\
    &\quad h^{(1)} \geq S^{(1)} h^{(0)} + b^{(1)}, \label{eq:icnn_lp:con2}\\
    &\quad h^{(l)} \geq W^{(l)} h^{(l-1)} + S^{(l)} h^{(0)} + b^{(l)}, \quad & l = 2,\ldots,L-1 \label{eq:icnn_lp:con3}\\
    &\quad h^{(l)} \geq 0, \quad & l = 1,\ldots,L-1 \label{eq:icnn_lp:con4}\\
    &\quad \hat{f}(x) = W^{(L)} h^{(L-1)} + S^{(L)} h^{(0)} + b^{(L)}, \label{eq:icnn_lp:con5}\\
    &\quad x \in \mathcal{X}. \label{eq:icnn_lp:con1}
\end{align}
\end{subequations}

As shown in Eq.~\eqref{eq:icnn_lp}, the LP reformulation for ICNN-class models relies solely on linear constraints, without introducing nonlinear constraints or integer variables. The essence of this reformulation is to replace the exact ReLU activation equations with their \textit{epigraph} representation. In this sense, constraints~\eqref{eq:icnn_lp:con2}--\eqref{eq:icnn_lp:con4} can be interpreted as the relaxation of the ReLU activation constraints. As a result, a feasible solution of Eq.~\eqref{eq:icnn_lp} may contain hidden-layer values that do not coincide with the activations produced by the trained FICNN under a forward pass. The next subsection addresses this distinct exactness question.

\subsection{Exactness of the ReLU epigraph reformulation}
Replacing ReLU equalities by epigraph inequalities enlarges the feasible region, because the hidden-layer variables may lie strictly above the corresponding values generated by the surrogate forward pass. However, this slack can be removed without worsening the objective under direct minimization of the FICNN output. Then LP reformulation is therefore tight at optimality, as formally established in Proposition~\ref{prop:icnn_lp_tightness}. 
The exactness of the epigraph reformulation is also analyzed by~\citet{liu_icnn_2026}, covering both fixed inputs and direct output minimization through its equivalence with the LP relaxation of an MIP reformulation. Here, we focus on the direct output minimization setting relevant to the trajectory planning problem considered in this work, and prove the tightness through a slack-tightening argument directly on the epigraph constraints.

\begin{proposition}[Tightness of the FICNN LP reformulation]
\label{prop:icnn_lp_tightness}
Consider the LP reformulation of a trained FICNN with ReLU activations shown in Eq.~\eqref{eq:icnn_lp}. Suppose that the hidden layer weights satisfy $W^{(l)} \geq 0$ for $l=2,\ldots,L$, and that the FICNN output is minimized with a positive coefficient in the objective. Then the LP reformulation is tight at optimality, in the sense that there exists an optimal solution in which every hidden neuron satisfies the exact ReLU activation $h^{(l)}=\max\{a^{(l)},0\}$, where $a^{(1)}=S^{(1)}h^{(0)}+b^{(1)}$ and $a^{(l)}=W^{(l)}h^{(l-1)}+S^{(l)}h^{(0)}+b^{(l)}$ for $l=2,\ldots,L-1$. Consequently, the LP reformulation gives the same optimal value as the forward pass output of the trained FICNN at the same optimal input.
\end{proposition}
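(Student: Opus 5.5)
Take any optimal solution $(x^\star,\tilde h^{(1)},\ldots,\tilde h^{(L-1)})$ of Eq.~\eqref{eq:icnn_lp}; one exists because the LP is feasible and we assume it is bounded below. Keep the input $x^\star$ fixed and construct the exact forward-pass activations $\bar h^{(l)}$ layer by layer:
\begin{equation*}
\bar h^{(1)}=\max\{S^{(1)}x^\star+b^{(1)},0\},\qquad
\bar h^{(l)}=\max\{W^{(l)}\bar h^{(l-1)}+S^{(l)}x^\star+b^{(l)},0\},\quad l=2,\ldots,L-1.
\end{equation*}
By construction, $(x^\star,\bar h)$ satisfies constraints~\eqref{eq:icnn_lp:con2}--\eqref{eq:icnn_lp:con4} and $x^\star\in\mathcal{X}$, so it is feasible.

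The core step is to show by induction on $l$ that $\bar h^{(l)}\le\tilde h^{(l)}$ componentwise.

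\emph{Base case.} Any feasible $\tilde h^{(1)}$ satisfies $\tilde h^{(1)}\ge S^{(1)}x^\star+b^{(1)}$ and $\tilde h^{(1)}\ge 0$. Hence $\tilde h^{(1)}\ge\max\{S^{(1)}x^\star+b^{(1)},0\}=\bar h^{(1)}$.

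\emph{Inductive step.} Assume $\bar h^{(l-1)}\le\tilde h^{(l-1)}$. Since $W^{(l)}\ge 0$, we have $W^{(l)}\bar h^{(l-1)}\le W^{(l)}\tilde h^{(l-1)}$. Therefore
\begin{equation*}
\tilde h^{(l)}\ge W^{(l)}\tilde h^{(l-1)}+S^{(l)}x^\star+b^{(l)}\ge W^{(l)}\bar h^{(l-1)}+S^{(l)}x^\star+b^{(l)}.
\end{equation*}
Combined with $\tilde h^{(l)}\ge 0$, this gives $\tilde h^{(l)}\ge\bar h^{(l)}$.

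The nonnegativity of the weights is what makes this induction work. Without it, lowering an earlier layer could raise the lower bound on a later one, and the argument would break. I expect this to be the main point to state carefully, even though it is short.

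Now compare objectives. Since $W^{(L)}\ge0$ and $\bar h^{(L-1)}\le\tilde h^{(L-1)}$,
\begin{equation*}
\hat f(\bar h)=W^{(L)}\bar h^{(L-1)}+S^{(L)}x^\star+b^{(L)}\le W^{(L)}\tilde h^{(L-1)}+S^{(L)}x^\star+b^{(L)}=\hat f(\tilde h).
\end{equation*}
The output enters the objective with a positive coefficient, so the objective value at $(x^\star,\bar h)$ is no larger than at the optimal point. Hence $(x^\star,\bar h)$ is also optimal, and every hidden neuron in it satisfies the exact ReLU relation.

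Finally, $\hat f$ evaluated at $\bar h$ is exactly the FICNN forward-pass output at $x^\star$. So the LP optimal value equals the trained network's output at the optimal input, which is the stated tightness.

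If the objective contains other terms, they depend only on $x$ and are unchanged because $x^\star$ is held fixed. This observation is what the later extensions will rely on.
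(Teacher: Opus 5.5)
Your proof is correct and uses the same monotonicity idea as the paper: the nonnegative weights $W^{(l)}\ge 0$ guarantee that lowering hidden values to the ReLU boundary never raises later pre-activations or the output. The paper phrases this as removing slack layer by layer. Your induction $\bar h^{(l)}\le\tilde h^{(l)}$ against the exact forward pass at $x^\star$ is a cleaner, more explicit version of that same successive-tightening step.
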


\begin{proof}
For each hidden neuron, constraints~\eqref{eq:icnn_lp:con2}--\eqref{eq:icnn_lp:con4} impose $h^{(l)} \geq a^{(l)}$ and $h^{(l)} \geq 0$, and therefore $h^{(l)} \geq \max\{a^{(l)},0\}$. Thus, these constraints describe the epigraph of the composition of the ReLU activations rather than the exact ReLU graph. Suppose that, at an optimal solution, some hidden neurons are strictly above this epigraph boundary, i.e., $h^{(l)}>\max\{a^{(l)},0\}$. Then $h^{(l)}$ can be decreased to $\tilde h^{(l)}=\max\{a^{(l)},0\}$ while preserving feasibility of the corresponding constraint pair. Since all downstream hidden layer weights are non-negative and all activation functions are non-decreasing, decreasing a hidden-layer variable cannot increase any subsequent pre-activation value, and hence cannot increase the final FICNN output. Applying this tightening argument successively through the hidden layers yields a feasible solution satisfying $\tilde h^{(l)}=\max\{\tilde a^{(l)},0\}$ for all $l=1,\ldots,L-1$, with resulting output no larger than before.

Therefore, any slack in the ReLU epigraph constraints can be removed without worsening the objective. If tightening the slack strictly decreases the objective value, then optimality is contradicted. Otherwise, the tightened solution attains the same objective value and is therefore also optimal. Hence, the LP reformulation is tight at optimality and correctly represents the trained FICNN surrogate under direct output minimization.
\end{proof}

Proposition~\ref{prop:icnn_lp_tightness} considers the direct minimization of a single FICNN output. However, the trajectory planning problem in Eq.~\eqref{eq:reduced_original_planning} has a weighted objective composed of analytical terms and two terms approximated by surrogates. In particular, the longitudinal comfort and efficiency costs remain analytical, whereas the safety cost and the optimal lateral comfort value are represented by separate surrogate models. The following remark states how the tightness result extends to this objective structure.

\begin{remark}[Extension to analytical objective terms and multiple surrogates]
\label{rem:objective_extension}
Consider an objective of the form
\begin{equation}\label{eq:multiple_surrogate_objective}
    g(x)+\sum_{k=1}^{K}\lambda_k\hat f_k(x),
    \qquad \lambda_k>0,
\end{equation}
where $g(x)$ does not depend on the internal variables of the surrogate models, and each $\hat f_k$ has an epigraph representation satisfying the non-negative weight conditions of Proposition~\ref{prop:icnn_lp_tightness}. Tightening the internal variables of surrogate~$k$ leaves $x$, $g(x)$, and the internal variables of the other surrogates unchanged, while it cannot increase $\hat f_k(x)$. Since $\lambda_k>0$, this operation does not increase the overall objective. The tightening argument can therefore be applied independently to each surrogate. Consequently, there exists an optimal solution in which the internal variables and output of every embedded surrogate coincide with their corresponding forward-pass values.
\end{remark}

Remark~\ref{rem:objective_extension} shows that the entire objective function need not be approximated by a surrogate model. Only computationally intractable or analytically unavailable components need to be replaced by surrogates, while the remaining components can be retained in analytical form. It also establishes that the LP reformulation remains exact at optimality for multiple FICNN surrogates, provided that each surrogate output enters the minimization objective with a positive coefficient. Thus, the trained surrogates considered in this work can be represented exactly at optimality without introducing binary variables.

Proposition~\ref{prop:icnn_lp_tightness} and Remark~\ref{rem:objective_extension} establish the conditions under which the validity of the LP reformulation is guaranteed. The optimization problem considered in this work satisfies these conditions, so the LP reformulation exactly represents the trained surrogates at optimality without further machinery. For settings beyond these assumptions, we refer to the LP-driven branch-and-bound approach by~\citet{liu_icnn_2026} as a complementary route.

\subsection{Extensions of input convex neural networks}
The FICNN introduced above provides the basic input convex architecture and corresponding LP reformulation. However, the safety surrogate required in this work must process two types of inputs: the planned LC trajectory, which is sequential and depends on the planning decisions, and supplementary traffic features, such as historical vehicle trajectories and static vehicle information, for which convexity is not required. Hence, this section introduces two extensions of the input-convex architecture: ICRNN for sequential trajectory inputs and PICNN for incorporating additional traffic information while preserving convexity with respect to the decision variables.

\subsubsection{Input convex recurrent neural network}

To capture the inherent temporal dependencies in vehicle trajectory data, we develop a many-to-one ICRNN model, building on the architectural principles introduced by~\citet{chen_optimal_2019}. The model architecture is illustrated in Figure~\ref{fig:icrnn}, with the corresponding forward pass defined as:%
\begin{subequations}
    \begin{align} 
        & h_0 = 0, \\
        & h_{t} = \sigma(W_h h_{t-1} + W_x x_t + b_h), \quad t = 1,2,\dots,T, \\
        & y = W_y h_T + S x + b_y, 
    \end{align}
\end{subequations}
where~$T$ is the length of the time series data, and $x = (x_1^\top, \dots, x_T^\top)^\top$ denotes the concatenated input sequence. To ensure the convexity of the output in $x$ and allow for an LP reformulation, the activation function~$\sigma$ is defined as ReLU, while the weight matrices $W_h$ and $W_y$ are constrained to be non-negative. Compared with generic many-to-one recurrent neural network (RNN) architectures, the proposed ICRNN incorporates non-negative weight constraints to enforce convexity, alongside skip connections designed to improve representational capacity. 

\begin{figure}[tb]
    \centering
    \includegraphics[width=0.70\linewidth]{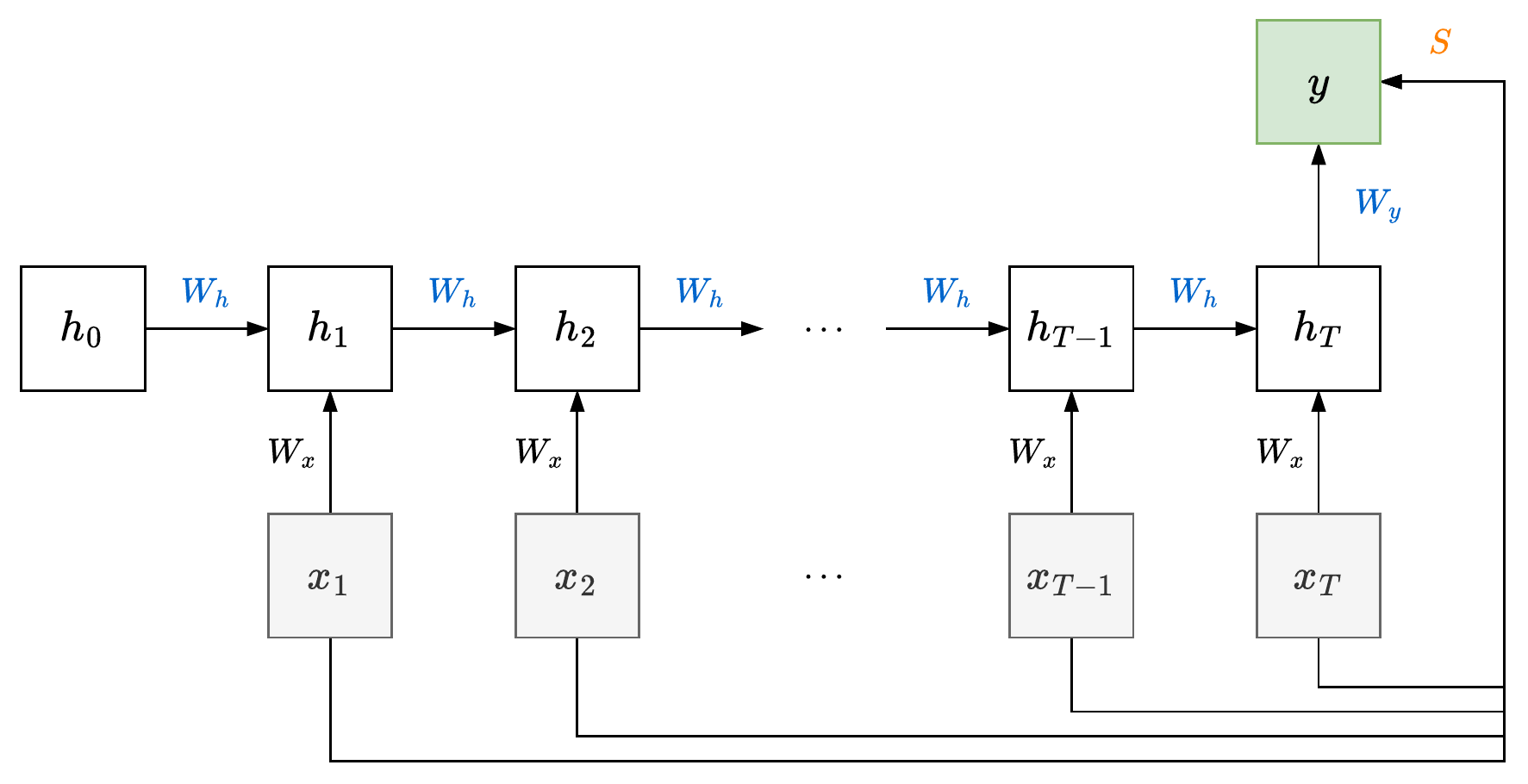}
    \caption{Architecture of a $T$-step input convex recurrent neural network (ICRNN).}
    \label{fig:icrnn}
\end{figure}

Our architecture also differs from the original ICRNN of~\citet{chen_optimal_2019} in three aspects. 
First, their many-to-many structure produces an output at every time step as required for model predictive control, whereas our many-to-one variant emits a single output after the final step, matching the scalar quantity we seek to predict. 
Second, we retain only the connections essential for convexity, discarding those from the previous input to the hidden state and from the previous hidden state to the output. Meanwhile, the skip connection $Sx$ generalizes the direct input--output connection and grants the output direct affine dependence on all time steps. 
Third, since the sign constraints apply only to $W_h$ and $W_y$, the matrices $W_x$ and $S$ acting directly on the input remain unrestricted, removing the need for the input duplication $\hat{x} = [x^\top, -x^\top]^\top$ used in their design. 
Taken together, these choices yield a streamlined ICRNN tailored to many-to-one prediction, one that preserves input convexity with fewer parameters and translates into a more compact LP reformulation.

The LP reformulation of ICRNN models follows the same ReLU epigraph mechanism as in the FICNN case. When the recurrent structure is unrolled over time, it forms a layered computational graph with non-negative downstream weights, allowing the tightness result in Proposition~\ref{prop:icnn_lp_tightness} to apply analogously. Unlike FICNNs, whose reformulation complexity scales with the hidden layer size and remains independent of input dimensions, the number of variables and constraints resulting from embedding ICRNN models scales with both the hidden state size and the input sequence length. Notably, the use of ReLU in ICRNN, required here for convexity and LP reformulation, departs from the standard practice of using tanh or sigmoid functions in RNNs. This exacerbates the risk of exploding gradients during backpropagation through time and also introduces the risk of neuron death~\citep{nwankpa_activation_2018}. To mitigate these risks, weight initialization with the identity matrix~\citep{le_simple_2015} and gradient clipping~\citep{pascanu_on_2013} are applied during the training process. 

\subsubsection{Partially input convex neural network}

Compared to standard FICNN, PICNN models may exhibit enhanced representation capacity by accommodating two distinct groups of inputs: the decision variables~$x$ of the original optimization problem and the auxiliary features~$y$. The outputs of PICNN models are convex only in decision variables, while no convexity requirements are imposed on the remaining inputs, thereby affording greater modeling flexibility than FICNN models. In this work, we consider two PICNN variants that differ in how the auxiliary feature stream derived from~$y$ interacts with the input-convex stream associated with~$x$. PICNN Type~I follows the original gated architecture of~\citet{amos_input_2017}, whereas PICNN Type~II is a simplified version proposed in this work.

As shown in Figure~\ref{fig:picnn_v1}, PICNN Type I utilizes a gating mechanism to modulate the decision variables stream with features extracted from the auxiliary inputs and is defined as follows:
\begin{subequations} \label{eq:picnn_v1}
\begin{align}
    v^{(l)} &= \sigma_1(W_v^{(l)} v^{(l-1)} + b_v^{(l)}), && l = 1,\ldots,L-1 \label{eq:picnn_v1:v_stream}\\
    h^{(1)} &= \sigma_2(S^{(1)} (x \odot \tilde{x}^{(1)}) + W_{vh}^{(1)} v^{(0)} + b_h^{(1)}), \label{eq:picnn_v1:h_first}\\
    h^{(l)} &= \sigma_2\big(W_h^{(l)} (h^{(l-1)} \odot \tilde{h}^{(l)}) + S^{(l)} (x \odot \tilde{x}^{(l)} ) + W_{vh}^{(l)} v^{(l-1)} + b_h^{(l)}\big),  && l=2,\ldots,L-1 \label{eq:picnn_v1:h_hidden}\\
    h^{(L)} &= W_h^{(L)} h^{(L-1)} + S^{(L)} h^{(0)} + W_{vh}^{(L)} v^{(L-1)} + b_{h}^{(L)}, \label{eq:picnn_v1:h_output}\\
    \tilde{h}^{(l)} &= \sigma_3(W_{v \tilde{h}}^{(l)} v^{(l-1)} + b_{v \tilde{h}}^{(l)}), && l=1,\ldots, L-1 \label{eq:picnn_v1:h_gate}\\
    \tilde{x}^{(l)} &= W_{v \tilde{x}}^{(l)} v^{(l-1)} + b_{v \tilde{x}}^{(l)}, && l=1,\ldots, L-1, \label{eq:picnn_v1:x_gate}
\end{align}
\end{subequations}
where~$\odot$ denotes the Hadamard product, namely the element-wise product. 

\begin{figure}[tb]
    \centering
    \includegraphics[width=1.00\linewidth]{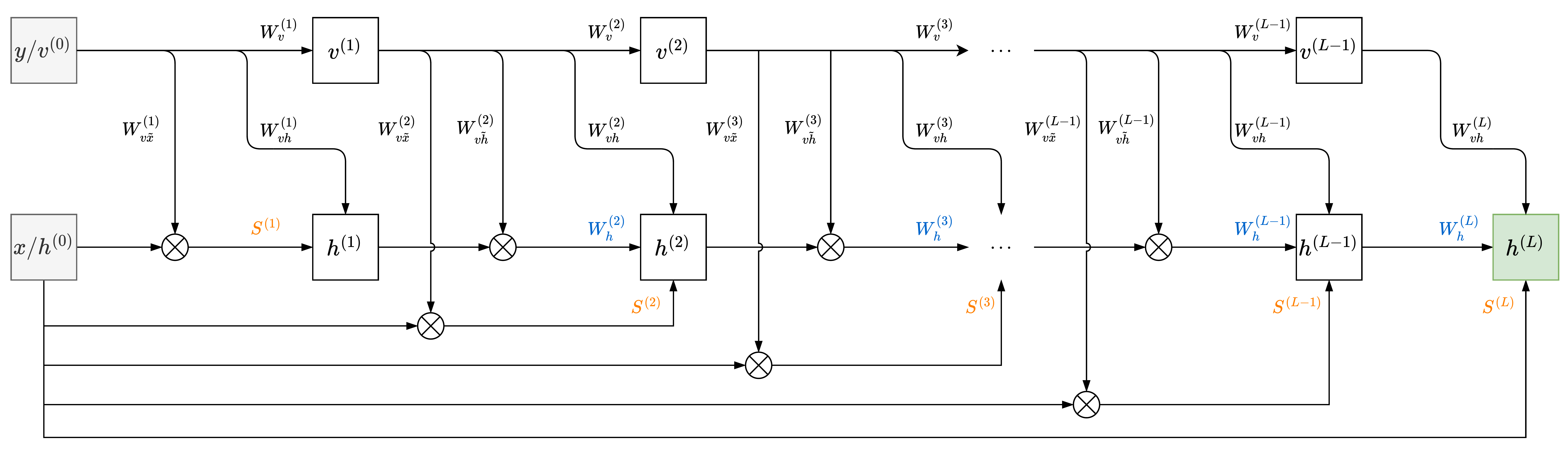}
    \caption{Architecture of an $L$-layer partially input convex neural network (PICNN Type I).}
    \label{fig:picnn_v1}
\end{figure}

The proposed variant, PICNN Type~II, is obtained by simplifying Type I. Specifically, the multiplicative modulation terms~$\tilde{x}^{(l)}$ and~$\tilde{h}^{(l)}$ and their associated gating transformations are removed. As illustrated in Figure~\ref{fig:picnn_v2}, the auxiliary stream is instead coupled to the input-convex stream through direct additive terms. The resulting architecture is defined as follows:
\begin{subequations} \label{eq:picnn_v2}
\begin{align}
    v^{(l)} & = \sigma_1(W_v^{(l)} v^{(l-1)} + b_v^{(l)}), \quad & l = 1,\ldots,L-1 \label{eq:picnn_v2:v_stream}\\
    h^{(1)} & = \sigma_2(S^{(1)} h^{(0)} + W_{vh}^{(1)} v^{(0)} + b_h^{(1)}), \label{eq:picnn_v2:h_first}\\
    h^{(l)} & = \sigma_2(W_h^{(l)} h^{(l-1)} + S^{(l)} h^{(0)} + W_{vh}^{(l)} v^{(l-1)} + b_h^{(l)}), \quad & l=2,\ldots,L-1 \label{eq:picnn_v2:h_hidden}\\
    h^{(L)} & = W_h^{(L)} h^{(L-1)} + S^{(L)} h^{(0)} + W_{vh}^{(L)} v^{(L-1)} + b_h^{(L)}. \label{eq:picnn_v2:h_output}
\end{align}
\end{subequations}

\begin{figure}[tb]
    \centering
    \includegraphics[width=0.80\linewidth]{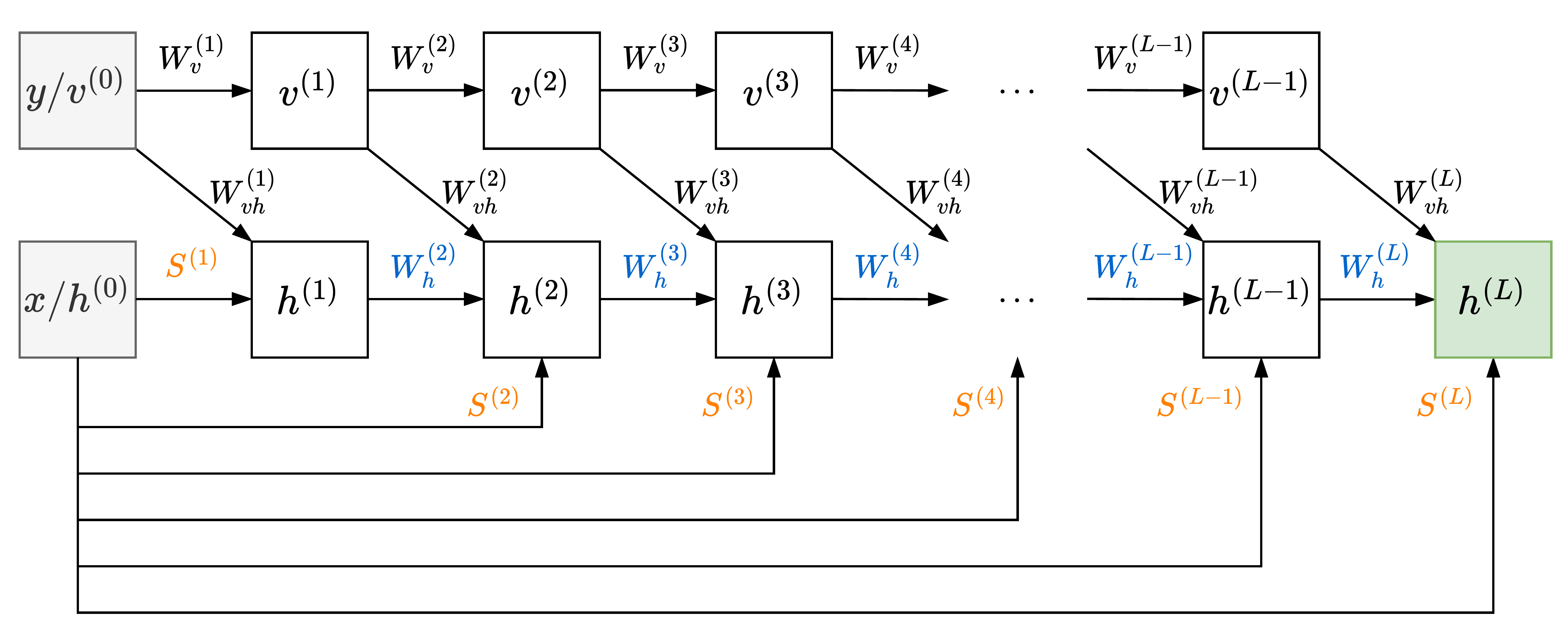}
    \caption{Architecture of an $L$-layer partially input convex neural network (PICNN Type II).}
    \label{fig:picnn_v2}
\end{figure}

In both architectures, $v^{(l)}$, defined in Eqs.~\eqref{eq:picnn_v1:v_stream} and~\eqref{eq:picnn_v2:v_stream}, represents the auxiliary feature stream generated from~$y$. The matrices~$S^{(l)}$ play the same role as the skip-connection weights in FICNNs by directly linking the decision variables to each hidden layer of the convex stream. In the gated architecture, the auxiliary stream~$v^{(l)}$ generates the modulation terms~$\tilde{x}^{(l)}$ and~$\tilde{h}^{(l)}$ through Eqs.~\eqref{eq:picnn_v1:h_gate} and~\eqref{eq:picnn_v1:x_gate}. These terms enter the convex stream through Hadamard products with~$x$ and~$h^{(l-1)}$, allowing the auxiliary features to modulate the contributions of the decision variables and the previous convex hidden layer. By removing these gating pathways, the proposed Type II architecture offers a simpler and more parsimonious alternative, whose practical benefits and trade-offs are examined in the experiments in Section~\ref{sec:experiments}.

Similar to FICNNs, the convexity of PICNN models with respect to~$x$ requires the activation function~$\sigma_2$ in Eqs.~\eqref{eq:picnn_v1} and~\eqref{eq:picnn_v2} to be convex and non-decreasing, and the weights~$W_h$ to be non-negative. For the gated architecture in Eq.~\eqref{eq:picnn_v1}, the output of~$\sigma_3$ must also be non-negative. The remaining activation functions and weights are not subject to these restrictions. This flexibility in the auxiliary pathway holds provided the auxiliary features~$y$ are fixed inputs rather than decision variables, so that the auxiliary stream and the modulation terms can be precomputed by a forward pass and treated as constants in the reformulation. Under these conditions, the convex pathway has the same layered epigraph structure and non-negative downstream weights as the FICNN, allowing the tightness result in Proposition~\ref{prop:icnn_lp_tightness} to apply analogously. Therefore, the resulting reformulation does not introduce additional constraints or decision variables beyond those required for an FICNN with the corresponding convex-path dimensions, thereby preserving comparable computational complexity.

\subsection{Surrogate models for trajectory planning} \label{sec:surrogate}

The reduced trajectory-planning problem in Eq.~\eqref{eq:reduced_original_planning} contains two objective components that are intractable for online solution. The safety surrogate is designed to replace the computationally expensive interaction-based safety cost, while the lateral comfort surrogate is designed to replace the implicit optimization required to compute the duration-dependent optimal value~$C_{\mathrm{lat}}^{\star}(\delta)$. Both surrogates utilize the input convex architectures introduced above, ensuring convexity with respect to the decision variables and enabling LP representations in the final planning formulation.

\subsubsection{Structured ICNN-class safety surrogate} 
The safety cost depends jointly on fixed historical context, planned LC trajectory, and duration. We therefore construct the architecture shown in Figure~\ref{fig:surrogate_1}, which comprises three principal modules:

\textrm{(i)} Historical encoder (non-convex branch): This module encodes the static and dynamic historical context of the LC environment. It takes the longitudinal historical trajectories of all vehicles and their respective vehicle lengths as input. Temporal features are extracted through a Long Short-Term Memory (LSTM)~\citep{hochreiter_long_1997}, while static features are processed by an MLP. The resulting representations are concatenated across all vehicles to form a context feature vector, serving as the non-convex input to the decoder.

\textrm{(ii)} Trajectory encoders (convex branch): This branch encodes the decision variables of the trajectory planning optimization problem. The planned ego LC trajectory, represented as a sequence of longitudinal positions or speeds over~$T$, is encoded by an ICRNN, while the LC duration~$\delta$ is embedded by an FICNN. The two resulting representations are concatenated to form the convex input vector, preserving convexity with respect to the original decision variables.

\textrm{(iii)} PICNN decoder: The decoder integrates the context and decision variable representations to predict the safety metric~$\hat{C}_{\text{safety}}$. By enforcing the PICNN structural constraints, the predicted output is convex with respect to the decision variables while imposing no convexity requirement on the contextual features. Skip connections of the PICNN decoder are deliberately removed. This is because, with the concatenated output of the ICRNN and FICNN serving as the PICNN's convex inputs, the validity of Proposition~\ref{prop:icnn_lp_tightness} requires non-negativity of all weights downstream of these inputs. This condition is violated by the unconstrained skip connection weights, so the validity of the LP epigraph relaxation no longer holds. This design retains sufficient expressive power, as supported by the safety-cost prediction results reported in Section~\ref{sec:prediction}, and enables valid LP reformulation, striking a balance between the representation capacity and computational efficiency.

\begin{figure}[tb]
    \centering
    \includegraphics[width=0.98\linewidth]{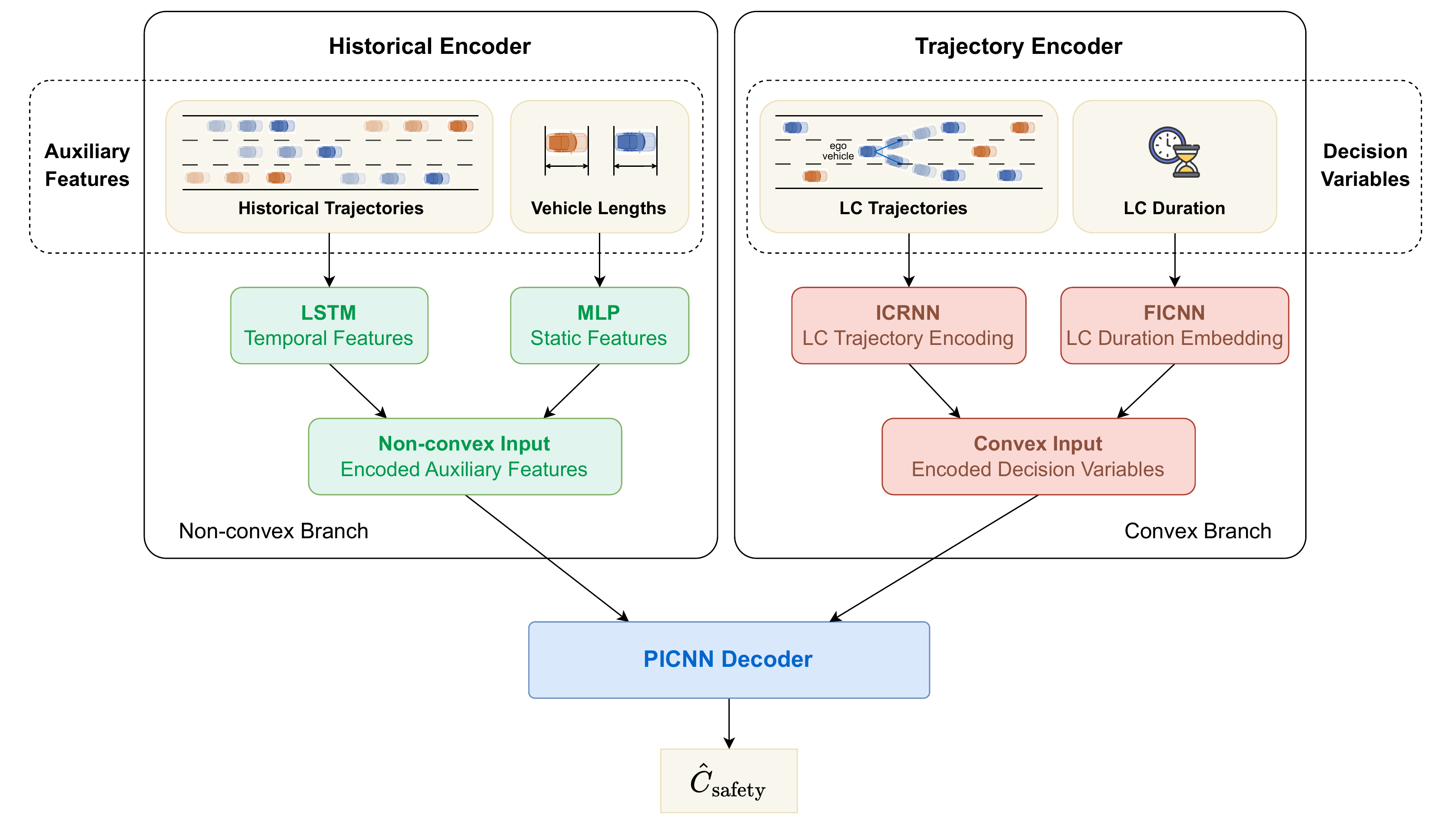}
    \caption{Architecture of the safety surrogate model, comprising a non-convex historical encoder, a convex trajectory encoder, and a skip-connection-free PICNN decoder.}
    \label{fig:surrogate_1}
\end{figure}

\subsubsection{FICNN lateral-comfort surrogate}

The optimal lateral comfort surrogate takes the LC duration~$\delta$ as input and approximates the optimal value function~$C_{\mathrm{lat}}^{\star}(\delta)$ defined by the duration-dependent optimization problem in Eq.~\eqref{eq:lateral_value_function}. This mapping adopts the FICNN architecture, which can be incorporated into the final planning problem through binary-free reformulation constraints.
As discussed in Section~\ref{sec:objective_components}, the optimal lateral comfort cost scales proportionally to~$\frac{1}{\delta^3}$ when minimizing the sum of squared lateral accelerations. Since $\delta > 0$, this relationship is convex in~$\delta$, making the FICNN a natural choice for approximating the corresponding cost function. 

\subsubsection{Output activation and training considerations} \label{sec:surrogate_considerations}

Although regression models typically use the identity activation function in the output layer, replacing it with a non-negative activation function can be beneficial and rational when the surrogate target is inherently non-negative. For the PICNN decoder, since the surrogate target $\hat{C}_{\text{safety}}$ as shown in Eq.~\eqref{eq:safety} is non-negative, the Softplus function~\citep{dugas2000incorporating}, a smooth version of ReLU, is adopted as the output layer activation. This function enforces the non-negative output range consistent with its physical meaning while mitigating training issues such as dead neurons and vanishing gradients~\citep{zheng_improving_2015}. The trade-off is that Softplus introduces nonlinear equality constraints during reformulation, rendering the reformulated problem non-convex and computationally challenging. 

To address this, the equality constraints representing the output layer forward pass are relaxed to inequality constraints, formulated as follows:
\begin{equation} \label{eq:softplus_ineq}
    \hat{f}(x) \geq \ln\left(1 + \exp\left(W^{(L)} h^{(L-1)} + S^{(L)} h^{(0)} + b^{(L)}\right)\right).
\end{equation}
By relaxing the output layer activation as in Eq.~\eqref{eq:softplus_ineq}, the feasible region is enlarged to the epigraph of the original Softplus function, and, since the Softplus function is inherently convex, its epigraph constitutes a convex set, resulting in a tractable convex relaxation. By a trivial optimality argument, Eq.~\eqref{eq:softplus_ineq} is binding at optimality, thereby preserving the convexity of the reformulated problem and guaranteeing global optimality. 

In contrast, although the lateral comfort cost is also non-negative, the comfort surrogate retains the identity output activation. Given the simplicity of this low-dimensional learning task, non-negativity need not be architecturally enforced, and the identity output keeps the corresponding reformulation fully linear.

When training ICNN-class models, the non-negativity constraints on the relevant weights are enforced by projecting weights onto the feasible region after each gradient step, according to~\citep{chen_data-driven_2020}:
\begin{equation} \label{eq:project}
    \Pi _X(y)=\arg\min_{x \in X}  \frac{1}{2}\left\|  x-y \right\| _{2}^{2}, 
\end{equation}
in which, $X$ and $y$ denote the feasible region and the weight value prior to projection, respectively. For weights subject to non-negativity constraints, this projection is implemented by clamping negative values to zero after each parameter update.

\subsection{Surrogate-embedded trajectory planning formulation}
\label{sec:final_formulation}

Let $\mathcal{E}_{\mathrm{safety}}(\xi)$ and $\mathcal{E}_{\mathrm{lat}}$ denote the binary-free convex epigraph representations of the trained safety and lateral comfort surrogates, respectively. The internal variables and constraints associated with these representations are suppressed here for notational simplicity. Replacing the two intractable objective components in Eq.~\eqref{eq:reduced_original_planning} with their surrogate representations yields
\begin{subequations}\label{eq:final_planning}
\begin{align}
    \min_{P^{\mathrm{x}},\delta}\quad
    & w_{\mathrm{s}}\hat C_{\mathrm{safety}}
      +w_{\mathrm{c}}\left[
      C_{\mathrm{lon}}(P^{\mathrm{x}})
      +\hat C_{\mathrm{lat}}^{\star}\right]
      +w_{\mathrm{e}}C_{\mathrm{eff}}(P^{\mathrm{x}})
      \label{eq:final_planning:obj}\\
    \mathrm{s.t.}\quad
    & P^{\mathrm{x}}\in\mathcal{P}_{\mathrm{x}},
      \qquad \delta_{\min}\leq\delta\leq T,
      \label{eq:final_planning:trajectory}\\
    & (P^{\mathrm{x}},\delta,\hat C_{\mathrm{safety}})
      \in\mathcal{E}_{\mathrm{safety}}(\xi),
      \label{eq:final_planning:safety}\\
    & (\delta,\hat C_{\mathrm{lat}}^{\star})
      \in\mathcal{E}_{\mathrm{lat}}.
      \label{eq:final_planning:lateral}
\end{align}
\end{subequations}
Here, the traffic context~$\xi$ is fixed during each planning instance. Compared with Eq.~\eqref{eq:reduced_original_planning}, only the safety cost and optimal lateral comfort cost are replaced by learned surrogates. The longitudinal comfort cost, efficiency cost, and physical trajectory constraints retain their analytical forms. Because the retained analytical terms are affine or convex quadratic and both surrogate epigraph sets are convex, problem~\eqref{eq:final_planning} is a binary-free convex optimization problem. Under the conditions established above, Proposition~\ref{prop:icnn_lp_tightness} ensures that the epigraph representations are tight at an optimum, while Remark~\ref{rem:objective_extension} shows that this result is preserved in the presence of the retained analytical terms and multiple surrogate outputs. Consequently, there exists an optimal solution at which both surrogate outputs coincide with their corresponding forward pass values.

\section{Numerical experiments} \label{sec:experiments}

This section evaluates the proposed methodology in terms of surrogate prediction accuracy, reformulation efficiency, and closed-loop trajectory planning performance. Section~\ref{sec:dataset} first describes the hybrid dataset used in the experiments. Section~\ref{sec:prediction} compares alternative architectures for safety-cost prediction and separately evaluates the FICNN surrogate for optimal lateral comfort cost. Section~\ref{sec:reformulation} compares the LP and MIP reformulation methods in terms of problem size and solution efficiency. Section~\ref{sec:performance} evaluates four surrogate-embedded trajectory planners against two conventional baselines by executing planned trajectories in SUMO~\citep{lopez_microscopic_2018} and measuring their realized performance.

All training experiments are conducted on the Triton computing cluster maintained by the Aalto Science-IT project, utilizing an Intel Xeon Platinum 8468 CPU and an NVIDIA H100 GPU with 80 GB of RAM. Reformulation experiments are conducted on a computing platform equipped with an AMD Ryzen 7 6800H processor and 16 GB RAM. The framework is implemented in Python 3.12.4, utilizing PyTorch 2.9.1~\citep{paszke_pytorch_2019} for model training, Pyomo 6.9.5~\citep{hart_pyomo_2011} for optimization modeling, and Gurobi 13.0.1~\citep{gurobillc_gurobi_2025} with default parameters for solving. 

\subsection{Dataset overview} \label{sec:dataset}

We adopt a hybrid dataset that combines real-world LC trajectories with simulation-generated scenarios to improve training data coverage. This design is motivated by the trust-region issue in optimization problems with embedded predictive NN models, where the optimal solutions may lie outside the range of the training dataset. In such cases, surrogate predictions and the resulting decisions become unreliable, since solvers can exploit extrapolation errors. The simulated scenarios therefore complement the real-world trajectory data by broadening the range of the dataset and enhancing robustness against rare situations.

The adopted dataset comprises 1,317 real-world scenarios sourced from the SWIFTraj dataset~\citep{han_swiftraj_2026} and 2,901 augmented scenarios generated via SUMO simulation. Each scenario consists of four vehicles: the ego vehicle, the obstacle vehicle, and the leading vehicles in both the initial and target lanes. For each vehicle, longitudinal positions and speeds are recorded at $10$ Hz over the $3$ seconds interval preceding the LC maneuver, yielding a 30-step historical trajectory per vehicle. The LC trajectory of the ego vehicle is discretized at $1$ second intervals over the fixed horizon $T$, thereby producing a 10-step trajectory profile and the associated LC maneuver duration $\delta$. The TET-based and TIT-based safety metrics of each scenario are calculated following Eqs.~\eqref{eq:tet} and~\eqref{eq:tit}, with the $\mathrm{TTC}^\star$ set as $1.5$~\citep{wang_review_2021}.

SWIFTraj data were collected from a highway in Nanjing, China, depicted in Figure~\ref{fig:highway}, during the morning peak period on workdays. The dataset contains detailed information for each LC vehicle, including lateral and longitudinal position, speed, acceleration, and the motion states of surrounding vehicles. Furthermore, since this dataset was collected during the morning peak hour, it comprehensively records the LC trajectories and vehicle interactions under congested conditions. 
Because simulation platforms such as SUMO rely on parameterized car-following and LC models, they may not fully reproduce congested LC interactions without careful calibration. The real-world component of the hybrid dataset is therefore essential for capturing high-density driving behavior. This distinction is clearly reflected in Figure~\ref{fig:mean_v}, where the mean speed derived from the real-world trajectory data is significantly lower than that of the simulation data.

\begin{figure}[tb]
    \centering
    \includegraphics[width=0.80\linewidth]{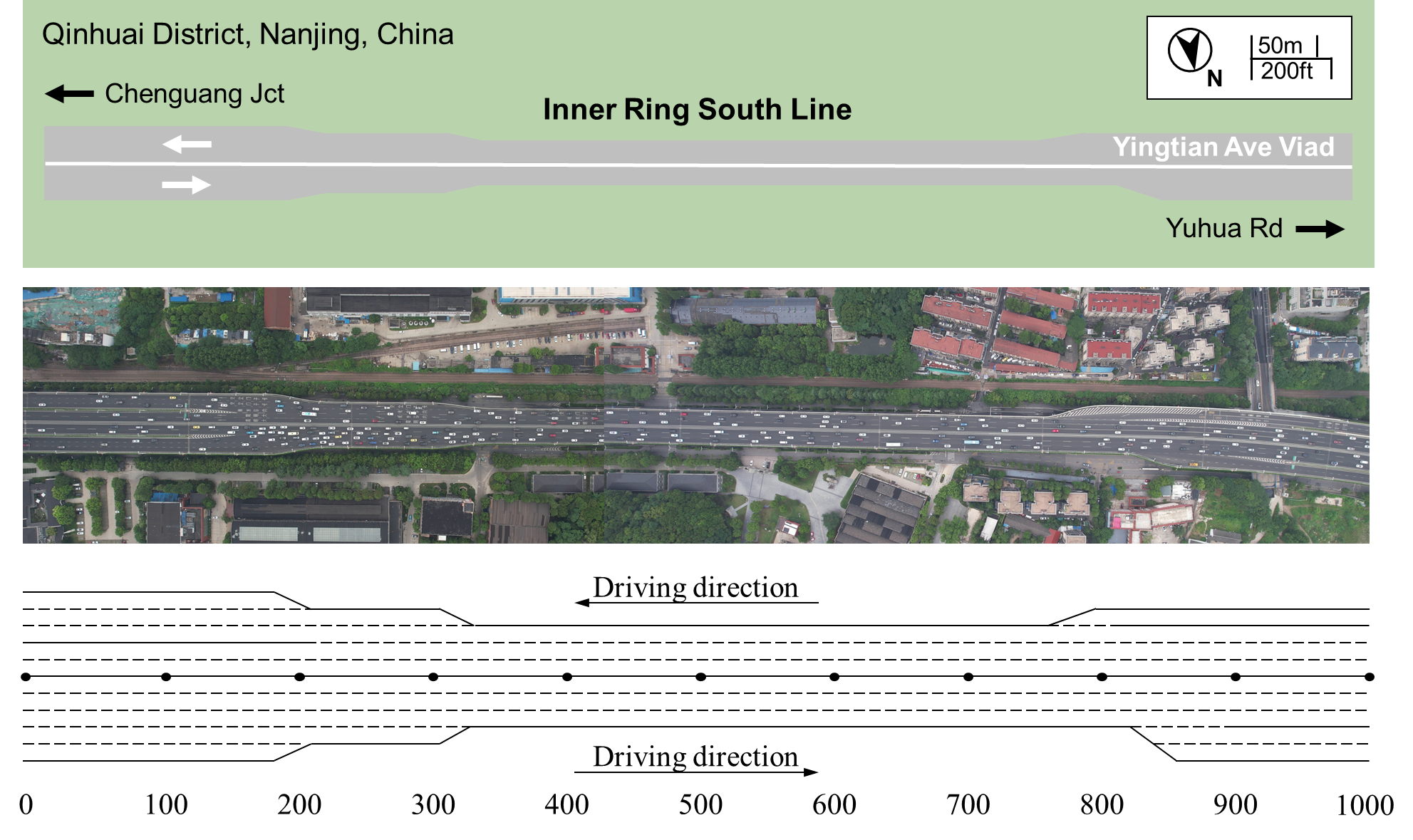}
    \caption{The geographical map of the site for SWIFTraj data collection, reproduced from~\citet{han_swiftrajweb_2026}.}
    \label{fig:highway}
\end{figure}

\begin{figure}[tb]
    \centering
    \includegraphics[width=0.65\linewidth]{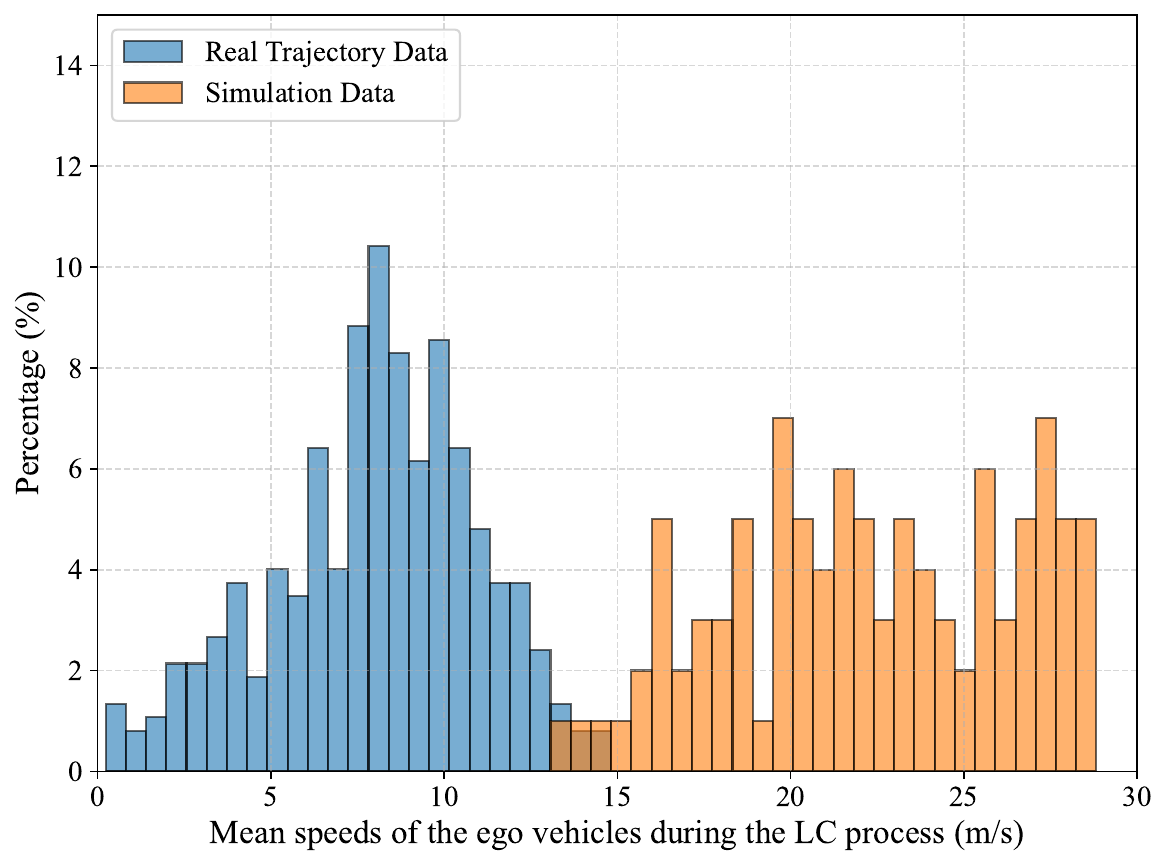}
    \caption{Distribution of mean speeds of ego vehicles during the LC process, comparing real trajectory data and simulation data. }
    \label{fig:mean_v}
\end{figure}

In the SUMO simulation framework, the default car-following and LC models are applied for all vehicles, while the associated parameters are systematically varied~\citep{erdmann2015sumo, lopez_microscopic_2018}. Furthermore, high-risk scenarios are generated in SUMO, thereby covering situations that are rare or absent from the real-world dataset.
As shown in Figs.~\ref{fig:tet} and~\ref{fig:tit}, the partial TET-based and TIT-based safety metric values are overwhelmingly larger than any values observed in the real-world dataset. These elevated values, which represent high-risk scenarios, provide adequate data diversity for the subsequent surrogate model training. Additionally, surrogate models face a greater challenge when trained on TIT-based safety metrics compared to TET-based ones. This discrepancy is primarily attributed to the wider numerical range and the presence of extreme values in the TIT-based dataset. These characteristics may influence the prediction performance across different model architectures, which will be presented and discussed in subsequent sections.

\begin{figure}[tb]
    \centering
    \includegraphics[width=0.65\linewidth]{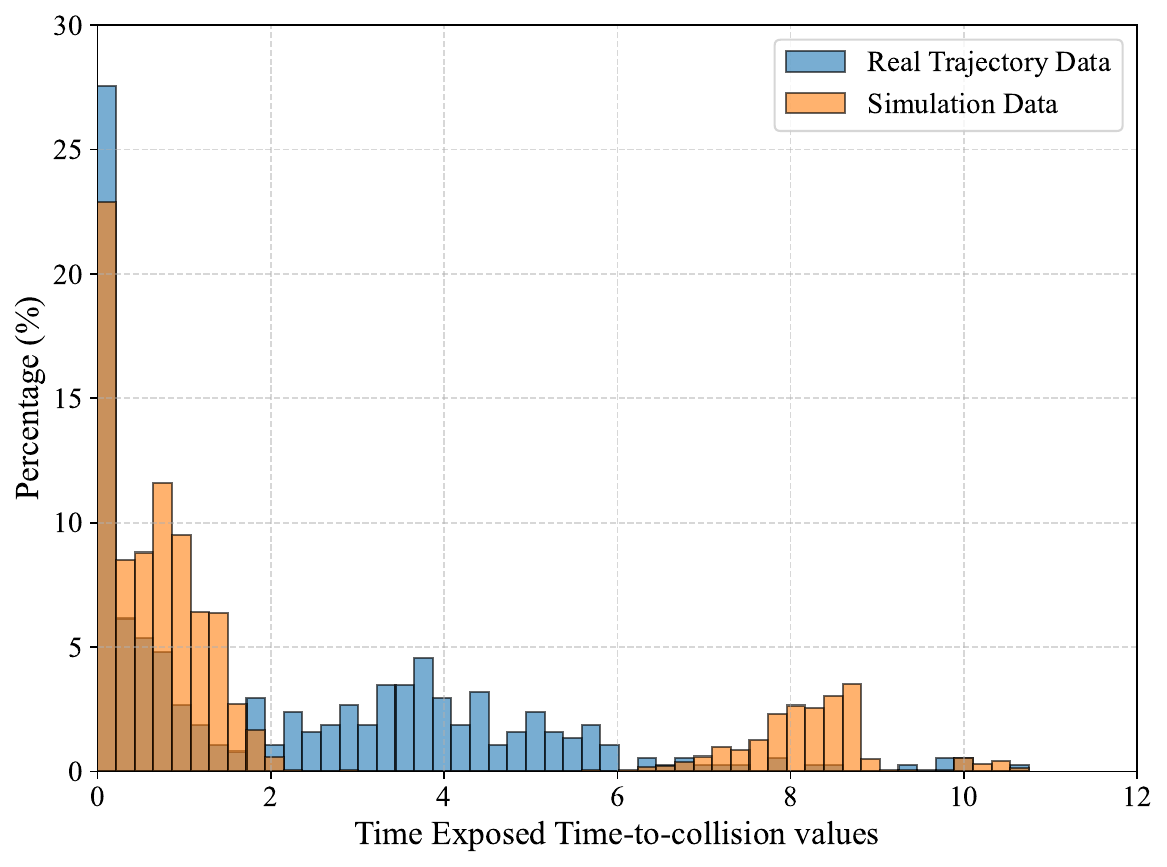}
    \caption{Distribution of TET-based safety metrics comparing real trajectory data and simulation data}
    \label{fig:tet}
\end{figure}

\begin{figure}[tb]
    \centering
    \includegraphics[width=0.65\linewidth]{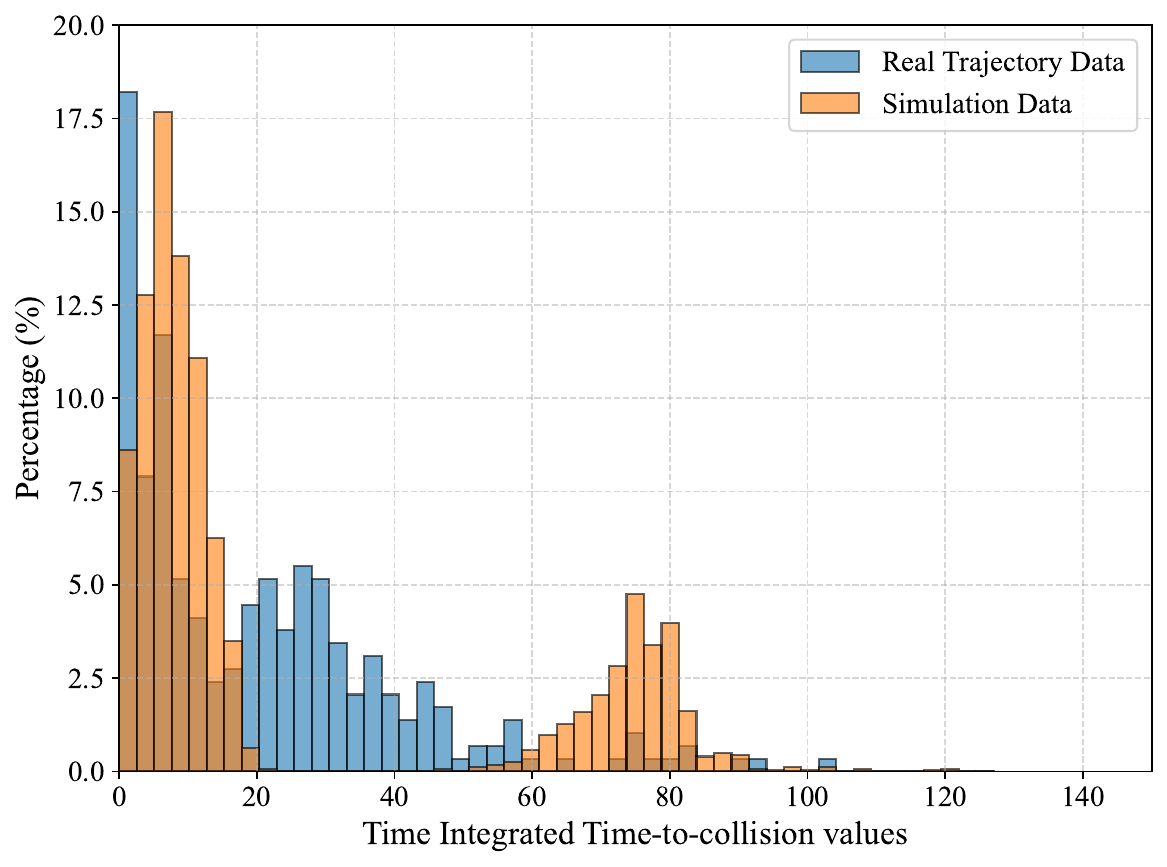}
    \caption{Distribution of TIT-based safety metrics comparing real trajectory data and simulation data}
    \label{fig:tit}
\end{figure}

\subsection{Model prediction performance} \label{sec:prediction} 

While the representation capacity of ICNN-class models is theoretically limited by their non-negative weight constraints, the practical implications of this constraint require empirical evaluation. This section, therefore, assesses the prediction performance of ICNN-class surrogate models against their unconstrained counterparts. The evaluation covers the two surrogate models described in Section~\ref{sec:surrogate}, which predict the total safety cost and lateral comfort cost during the entire LC process, respectively.

The proposed surrogate model for safety metrics incorporates static features and dynamic vehicle states as inputs, as shown in Figure~\ref{fig:surrogate_1}. Static information includes the lengths of all involved vehicles, while dynamic information comprises the historical speed and position profiles of all vehicles and the ego vehicle's trajectories during the LC maneuver. These profiles are normalized relative to the ego vehicle, thereby facilitating model training. Moreover, this relative representation reduces the input range, enabling smaller big-$M$ values in the reformulation. The tightening of the search space significantly improves computational efficiency when solving optimization problems integrating with trained models.

Table~\ref{tab:compare model} summarizes the four architectures evaluated for safety-cost prediction. All models share the same processing pipeline for static features and historical state information, but differ in encoders for LC trajectories and duration. The Input Reduced Baseline omits these planned-trajectory inputs to assess their contribution, whereas the unconstrained Baseline uses LSTM and MLP modules to encode the trajectory and duration. We refer to the two ICNN-class safety-surrogate configurations as Proposed Model I and Proposed Model II, which employ the gated PICNN Type~I decoder and the simplified additive PICNN Type~II decoder, respectively. Both configurations share the same historical encoder, ICRNN trajectory encoder, and FICNN duration encoder, differing only in their PICNN decoder architectures. For each of the four architectures, separate model instances are trained to predict TET-based and TIT-based safety costs.

\begin{table}[tb]
    \footnotesize
    \centering
    \caption{Architectural comparison of the evaluated surrogate models.}
    \label{tab:compare model}
    \begin{tabular}{lccc}
        \toprule
        Model & \makecell{LC Trajectories \\ Encoder} & \makecell{LC Duration \\ Encoder} & Decoder\\
        \midrule
        Input Reduced Baseline & -- & -- & MLP \\
        Baseline               & LSTM  & MLP   & MLP \\
        Proposed Model I       & ICRNN & FICNN & PICNN Type I \\
        Proposed Model II      & ICRNN & FICNN & PICNN Type II \\
        \bottomrule
    \end{tabular}
\end{table}

The MLP decoder in both the Input Reduced Baseline and Baseline models consists of three hidden layers with dimensions [64, 64, 32]. For the Baseline model, the LSTM encoder for LC trajectories has a hidden dimension of 16, and the MLP encoder for LC duration has hidden dimensions of [8, 8]. For Proposed Models I and II, the ICRNN encoder for LC trajectories has hidden and output dimensions of 8 and 16, respectively, and the FICNN encoder for LC duration shares the same hidden dimensions as the MLP duration encoder. Although PICNN Types~I and~II differ in architecture, both decoders maintain the same hidden dimensions: [32, 32, 16] for the convex path and [64, 64, 32] for the non-convex path. ReLU activation functions are applied between all hidden layers across all models. The output layer activations follow the selections described in Section~\ref{sec:surrogate_considerations}, with Softplus for the PICNN decoder in the safety surrogate and identity elsewhere.

All models are trained using the Adam optimizer~\citep{kingma_adam_2017} with an initial learning rate of $10^{-3}$, a batch size of 32, and a maximum of 300 epochs. A ReduceLROnPlateau learning rate scheduler~\citep{PyTorch_reducelronplateau_2025} with a patience of 10 and a decay factor of 0.5 is applied. The hybrid dataset described in Section~\ref{sec:dataset} is randomly partitioned into training, validation, and test sets at a ratio of $70{:}15{:}15$. Furthermore, training PICNN models is particularly challenging due to their special architectures and non-negative weight constraints, so the selection of an appropriate initialization strategy is critical. Specifically, the weights for skip and cross-connections should be initialized to near-zero values to ensure a more stable training process.

Tables~\ref{tab:tet} and~\ref{tab:tit} present the safety-cost prediction accuracy of the four architectures for TET-based and TIT-based safety costs, respectively. The reported statistics are obtained from 20 independent training runs using the same hyperparameters. The Input Reduced Baseline model exhibits the lowest prediction accuracy for both targets, indicating that the LC trajectories and duration of the ego vehicle contain substantial information for safety evaluation and cannot be omitted. Although the unconstrained Baseline achieves the highest accuracy, Proposed Models I and II maintain competitive performance while satisfying the structural requirements for convex embedding. For both prediction targets, Proposed Model I consistently outperforms Proposed Model II across both labels, suggesting that the gating mechanism of the original PICNN provides greater representational flexibility than the simplified additive decoder.

\begin{table}[tb]
    \footnotesize
    \centering
    \caption{Prediction performance of surrogate models for TET-based safety metric evaluated on the test dataset. Values are presented as mean $\pm$ standard deviation.}
    \label{tab:tet}
    \begin{tabular}{lccc}
        \toprule
        Model & MAE & RMSE & $R^2$ \\
        \midrule
        Input Reduced Baseline & $0.6055 \pm 0.0067$ & $1.3785 \pm 0.0061$ & $0.8072 \pm 0.0017$ \\
        Baseline               & $0.1525 \pm 0.0236$ & $0.4580 \pm 0.0342$ & $0.9786 \pm 0.0031$ \\
        Proposed Model I       & $0.2865 \pm 0.0072$ & $0.7026 \pm 0.0070$ & $0.9499 \pm 0.0010$ \\
        Proposed Model II      & $0.3225 \pm 0.0055$ & $0.7626 \pm 0.0073$ & $0.9410 \pm 0.0011$ \\
        \bottomrule
    \end{tabular}
\end{table}

\begin{table}[tb]
    \footnotesize
    \centering
    \caption{Prediction performance of surrogate models for TIT-based safety metric evaluated on the test dataset. Values are presented as mean $\pm$ standard deviation.}
    \label{tab:tit}
    \begin{tabular}{lccc}
        \toprule
        Model & MAE & RMSE & $R^2$ \\
        \midrule
        Input Reduced Baseline & $5.7773 \pm 0.1306$ & $14.3723 \pm 0.0649$ & $0.7650 \pm 0.0021$ \\
        Baseline               & $2.1371 \pm 0.3413$ & $7.2389  \pm 1.1846$ & $0.9388 \pm 0.0196$ \\
        Proposed Model I       & $2.9469 \pm 0.0896$ & $7.5187  \pm 0.4438$ & $0.9355 \pm 0.0081$ \\
        Proposed Model II      & $3.5098 \pm 0.0882$ & $9.6971  \pm 0.2507$ & $0.8930 \pm 0.0055$ \\
        \bottomrule
    \end{tabular}
\end{table}

We next evaluate a separate surrogate for lateral-comfort cost prediction, distinct from the two safety-surrogate configurations considered above. As discussed in Section~\ref{sec:surrogate}, the optimal lateral comfort cost is convex in the LC duration~$\delta$ when the objective is to minimize the sum of squared lateral accelerations. To investigate whether the ICNN-class models exhibit better performance when the underlying input-output relationship is convex, we compare an FICNN with a standard MLP using the same hidden-layer dimensions to predict the optimal lateral-comfort cost across LC durations. Both models comprise two hidden layers with dimensions~[16, 16] and are trained using the Adam optimizer with a learning rate of $10^{-3}$, a batch size of 16, and a maximum of 300 epochs. Table~\ref{tab:lateral_comfort} reports the experiment results over 50 random seeds with the same dataset split. The FICNN achieves consistently lower MAE and RMSE than the MLP, suggesting that its convex structure effectively captures the underlying relationship between the optimal cost and the LC duration. Figure~\ref{fig:lateral_model} presents the fitted curves alongside sampled ground-truth values.

\begin{table}[tb]
    \footnotesize
    \centering
    \caption{Prediction performance of FICNN and MLP models for the optimal lateral comfort cost as a function of LC duration.}
    \label{tab:lateral_comfort}
    \begin{tabular}{lccc}
        \toprule
        Model & MAE & RMSE & $R^2$ \\
        \midrule
        MLP   & $0.0306 \pm 0.0088$ & $0.0772 \pm 0.0188$ & $0.9969 \pm 0.0016$ \\
        FICNN & $0.0150 \pm 0.0011$ & $0.0408 \pm 0.0012$ & $0.9992 \pm 0.0001$ \\
        \bottomrule
    \end{tabular}
\end{table}

\begin{figure}[tb]
    \centering
    \includegraphics[width=0.65\linewidth]{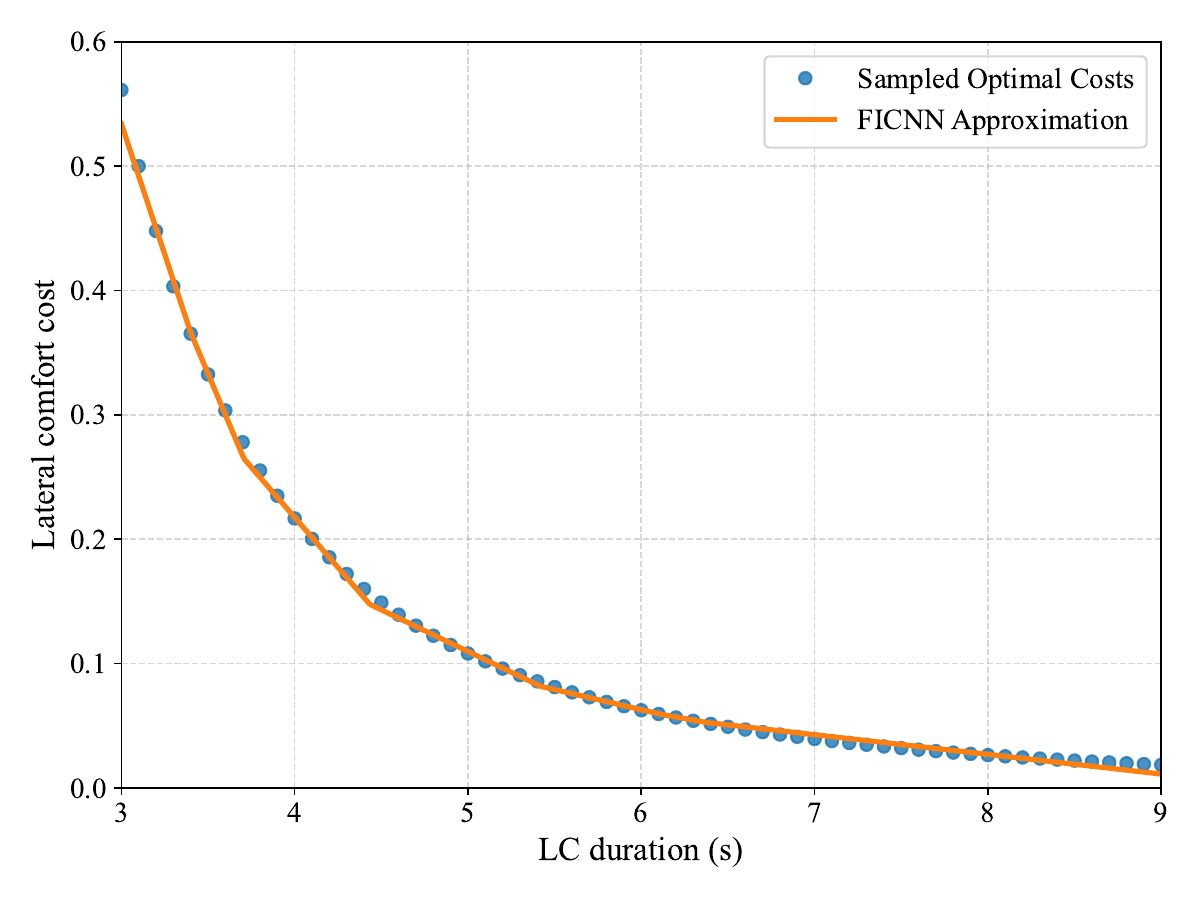}
    \caption{Predicted optimal lateral comfort cost of the FICNN surrogate against the sampled ground truth as a function of the LC duration.}
    \label{fig:lateral_model}
\end{figure}

Overall, the safety-cost experiments demonstrate that Proposed Models I and II achieve competitive predictive accuracy while satisfying the structural requirements for convex embedding, although their accuracy remains below that of the unconstrained Baseline. In contrast, the separate lateral-comfort experiment demonstrates that the FICNN outperforms the MLP when the target mapping is inherently convex. Together, these findings support the use of ICNN-class surrogates to represent the two distinct objective components within the proposed planning framework.

\subsection{Reformulation performance} \label{sec:reformulation}

This section evaluates the computational cost of embedding the two safety-surrogate architectures, Proposed Models I and II, introduced in Section~\ref{sec:prediction}. For each architecture, we compare the binary-free LP reformulation based on Eq.~\eqref{eq:icnn_lp} with a conventional MIP reformulation of the ReLU layers based on Eq.~\eqref{eq:fnn_mip}. The comparison assesses the resulting optimization problem sizes and solution times, while also verifying the consistency between the LP-based embedding and the trained surrogate. In both formulations, weights with absolute values below $10^{-4}$ are pruned to increase sparsity and improve computational efficiency. 
The Softplus output layer for the PICNN decoder of the safety surrogate is handled by the epigraph relaxation described in Section~\ref{sec:surrogate_considerations}.

The number of binary variables, continuous variables, and constraints, along with the solution time over 100 randomly selected LC scenarios, are reported in Table~\ref{tab:reformulate}. The LP reformulation introduces only continuous variables, whereas the MIP reformulation additionally introduces binary variables and extra constraints to represent the ReLU activations. Consequently, the LP reformulation is at least 30 times faster than the MIP reformulation for both model types. With an average solve time of under 0.1 seconds and better numerical stability, the LP-based approach is suitable for real-time trajectory planning. Across all 100 scenarios, the LP and MIP reformulations attain identical optimal objective values, and the surrogate output at the LP optimum coincides with the forward pass of the trained models, consistent with the tightness established in Proposition~\ref{prop:icnn_lp_tightness}.

\begin{table}[tb]
    \footnotesize
    \centering
    \caption{Computational performance and structural statistics of the two safety-surrogate architectures, Proposed Models I and II, under LP and MIP reformulations. Values are presented as mean $\pm$ standard deviation.}
    \label{tab:reformulate}
    \begin{tabular}{lccccc}
        \toprule
        Model & \makecell{Reformulation \\ Type} & \makecell{Binary \\ Variables} & \makecell{Continuous \\ Variables} & Constraints & \makecell{Solution \\ Time (s)} \\
        \midrule
        Proposed Model I  & LP  & --  & 255 & 255 & $0.0560 \pm 0.0156$ \\
        Proposed Model I  & MIP & 200 & 255 & 655 & $2.2146 \pm 1.0703$ \\
        Proposed Model II & LP  & --  & 255 & 255 & $0.0512 \pm 0.0107$ \\
        Proposed Model II & MIP & 200 & 255 & 655 & $1.5420 \pm 0.8623$ \\
        \bottomrule
    \end{tabular}
\end{table}

\subsection{Planner performance} \label{sec:performance}

As presented in Section~\ref{sec:prediction}, the  Proposed Models I and II achieve competitive prediction accuracy while satisfying the structural requirements for convex optimization. However, prediction accuracy alone does not necessarily reflect the performance of a surrogate when embedded in a trajectory planner. To evaluate the proposed framework in closed-loop operation, the optimized trajectories are executed in SUMO and compared against two conventional planning methods.

Four planner variants are constructed by combining the two safety-surrogate architectures with the two safety metrics. The I-TET and I-TIT planners embed the TET-trained and TIT-trained instances of Proposed Model I, respectively. Similarly, the II-TET and II-TIT planners embed the corresponding instances of Proposed Model II. In these abbreviations, I/II denotes the safety-surrogate architecture, whereas TET/TIT denotes both the training target and the safety-cost term used in trajectory optimization. All four planners share the same optimization formulation and lateral-comfort surrogate, differing only in the safety-surrogate architecture and safety metric.

Baseline~1 employs an optimization-based planner under the constant speed assumption, whereby surrounding vehicles are assumed to maintain their initial longitudinal speed throughout the entire LC process~\citep{Nilsson_lane_2017}. Baseline~2 adopts a simulation-based candidate trajectory planning approach with interaction-aware prediction, in which all candidates are evaluated while accounting for the responses of surrounding vehicles~\citep{evestedt2016interaction, trauth2024frenetix}. For each initial state, 25 longitudinal speed profiles and 5 lateral position profiles that satisfy all physical constraints are generated and combined, yielding 125 trajectory candidates. Vehicle interactions are simulated using the native Krauss car-following model~\citep{krauss1997metastable}, with the same parameters employed during training dataset generation.
To ensure a consistent trajectory representation across all planners, the degrees of the Bézier curves for longitudinal speed and lateral position are fixed as 7 and 9, respectively. The objective weights for TET safety, TIT safety, comfort, and efficiency are set at 50, 5, 10, and 1, reflecting the prioritization of safety in trajectory optimization. The different safety-cost weights account for the numerical scales of the two safety metrics. For performance evaluation, both TET-weighted and TIT-weighted objective values are computed for each planner, regardless of the safety metric used in its optimization objective.
Since the validity of the LP reformulation is verified in Section~\ref{sec:reformulation}, each trajectory planner incorporates its corresponding trained model through the LP reformulation to enhance computational efficiency. Finally, all baseline and proposed planners are evaluated on the identical 100 test scenarios that are excluded from the training dataset.

Table~\ref{tab:overall_performance} compares the mean TET- and TIT-weighted objective values of the six planners over the 61 common successful scenarios, in which all planners complete the LC maneuver without collisions. The numbers of infeasible and collision scenarios are reported over all 100 test scenarios. Note that an infeasible scenario does not necessarily indicate that the LC maneuver is impossible, but rather that the planner fails to find a feasible solution under its assumptions or within a fixed number of trajectory candidates. Furthermore, these limitations may also lead to collisions during trajectory execution.
As shown in Table~\ref{tab:overall_performance}, Baseline~1 exhibits the poorest performance due to the limitation of the oversimplified constant-speed assumption in capturing interactions among vehicles. In comparison, Baseline~2 performs better, benefiting from its integration of interaction-aware predictions.
Nevertheless, its planning performance remains constrained by the finite set of candidate trajectories. 
All four proposed planners successfully complete all scenarios without any infeasibility or collisions, demonstrating improved feasibility and safety.
Moreover, in terms of weighted objectives, all four proposed planners outperform Baseline~1 under both objective formulations. Compared with Baseline~2, all four proposed planners achieve lower mean TET-weighted costs, while three of them also achieve lower mean TIT-weighted costs, with I-TIT being the exception. Overall, these results demonstrate that the proposed planners can improve both planning reliability and overall objective performance by integrating interaction models into continuous trajectory optimization.

\begin{table}[tbp]
    \footnotesize
    \centering
    \caption{Comparison of six planners across 100 test scenarios. Mean weighted objective values are computed over the 61 scenarios in which all planners completed LC maneuvers without collisions. The numbers of infeasible and collision scenarios are reported over 100 test scenarios. Both weighted-cost columns are evaluated for every planner, independent of the safety metric used during optimization.}
    \label{tab:overall_performance}
    \begin{tabular}{lcccc}
        \toprule
        Planner & Weighted Cost (TET) & Weighted Cost (TIT) & Infeasible Scenarios & Collision Scenarios \\
        \midrule
        Baseline 1 & 103.475 & 86.514 & 23 & 13 \\
        Baseline 2 & 61.957 & 44.525 & 2 & 7 \\
        I-TET     & 51.141 & 44.046 & - & - \\
        I-TIT     & 60.461 & 51.078 & - & - \\
        II-TET    & 42.950 & 35.800 & - & - \\
        II-TIT    & 44.593 & 35.246 & - & - \\
        \bottomrule
    \end{tabular}
\end{table}

The individual objective components presented in Table~\ref{tab:raw_metrics} provide further insight into the performance improvement realized by the proposed planners. All four proposed planners attain lower safety costs than both baseline methods under the TET and TIT, which constitutes the primary contribution to the reduced weighted objectives. These reductions indicate that the proposed planners can more effectively mitigate risky interactions with surrounding vehicles. 
However, the improvements in comfort and efficiency are not uniform across the proposed planners. Both Model~II planners achieve substantially lower longitudinal comfort costs. The Model I planners also considerably improve longitudinal comfort relative to Baseline 1, although their costs remain slightly higher than those of Baseline 2. These differences are consistent with the longitudinal speed profiles in the first row of Figure~\ref{fig:speed_profiles}, where the proposed planners exhibit distinct patterns of speed adjustment during the LC process. Since the longitudinal comfort cost is defined as the sum of squared longitudinal accelerations in Eq.~\eqref{eq:comfort:longitudinal}, smoother speed transitions generally result in lower comfort costs. In contrast, all four proposed planners incur higher lateral comfort costs than Baseline 2, suggesting that reducing interaction-related safety risks may require longer LC time.

The efficiency costs also vary considerably among the planners. According to Eq.~\eqref{eq:efficiency}, the efficiency cost quantifies the accumulated difference between the initial speed and the planned longitudinal speed over the planning horizon. In particular, II-TIT achieves the lowest efficiency cost. This is consistent with the longitudinal speed profiles illustrated in Figure~\ref{fig:speed_profiles}, where II-TIT exhibits the most aggressive acceleration behavior in several representative scenarios. Furthermore, as illustrated in the second row of Figure~\ref{fig:speed_profiles}, the proposed planners generate similar longitudinal speed profiles in certain scenarios, despite adopting different surrogate architectures and safety metrics. 

\begin{table}[tbp]
    \footnotesize
    \centering
    \caption{Mean values of performance metrics for baseline methods and all proposed planners on common successful scenarios.}
    \label{tab:raw_metrics}
    \begin{tabular}{lccccc}
        \toprule
        Planner & $C_{\mathrm{safety}}$ (TET) & $C_{\mathrm{safety}}$ (TIT) & $C_{\mathrm{lon}}$ & $C_{\mathrm{lat}}$ & $C_{\mathrm{eff}}$ \\
        \midrule
        Baseline 1 & 1.020 & 6.813 & 4.569 & 0.650 & 0.264 \\
        Baseline 2 & 0.961 & 6.120 & 1.551 & 0.369 & $-$5.276\\
        I-TET    & 0.493 & 3.513 & 1.648 & 0.599 & 4.019 \\
        I-TIT    & 0.560 & 3.725 & 1.645 & 0.917 & 6.833 \\
        II-TET     & 0.513 & 3.697 & 1.206 & 0.608 & $-$0.829 \\
        II-TIT     & 0.602 & 4.147 & 1.220 & 0.864 & $-$6.330 \\
        \bottomrule
    \end{tabular}
\end{table}

\begin{figure}
    \centering
    \includegraphics[width=\linewidth]{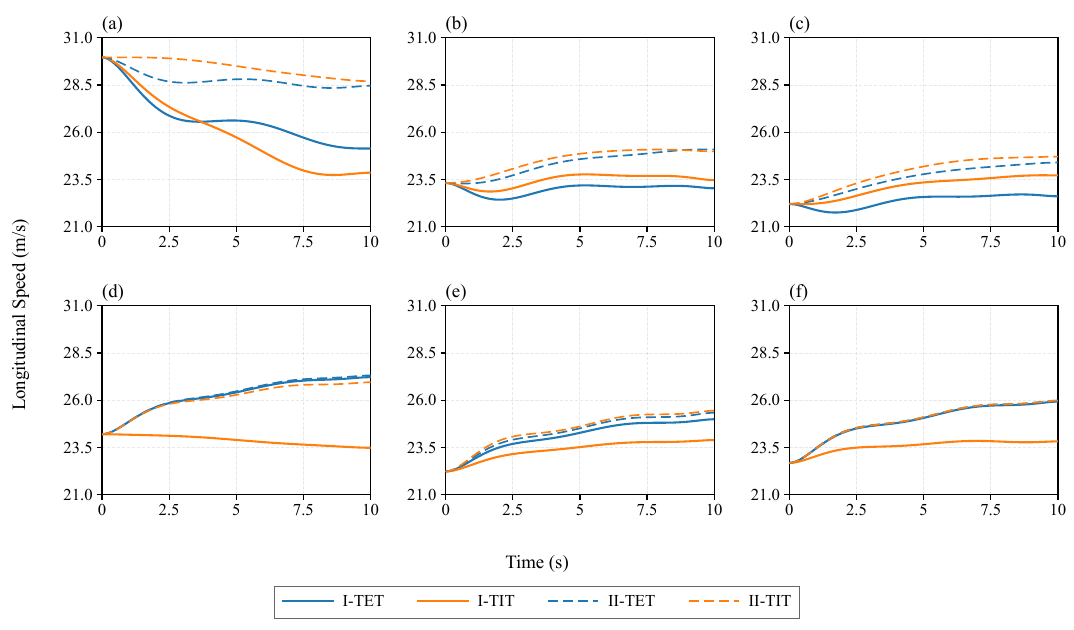}
    \caption{Longitudinal speed profiles generated by the proposed planners on representative LC scenarios.}
    \label{fig:speed_profiles}
\end{figure}

Table~\ref{tab:overall_performance} shows that the Model II planners achieve lower mean weighted objective values than the Model I planners under both evaluation metrics. The scenario-level comparison in Table~\ref{tab:best_planner} provides further evidence of this advantage. Among the four proposed planners, II-TET achieves the lowest weighted objective in 44 and 50 of the 100 scenarios under the TET- and TIT-weighted evaluations, respectively. Considering the two Model II planners together, at least one of them achieves the lowest weighted objective in 70 and 66 scenarios under the respective evaluation metrics. Furthermore, the distributions of the best-performing planners are broadly similar across the two evaluation metrics, suggesting that the relative advantage of Model II is not solely determined by the choice of evaluation metric. 
This result may appear counterintuitive, as Tables~\ref{tab:tet} and~\ref{tab:tit} indicate that Model I achieves higher overall prediction accuracy on the test dataset. However, in optimization problems with embedded NNs, prediction accuracy around the optimal solutions is often more relevant to downstream solution quality than overall accuracy on the entire test dataset~\citep{liu_simulator-based_2025}. Consistent with this view, Table~\ref{tab:error} shows that Model II yields lower prediction errors when evaluated on each planner's own optimized trajectories. This discrepancy may partly explain why the planners based on Model II achieve better downstream planning performance despite their lower test-set prediction accuracy.
Although no single planner consistently dominates in every scenario, these results suggest that the simpler Model II architecture generally leads to better planning outcomes given the current size of the training dataset. However, this advantage may not hold for larger and more diverse training datasets, where the greater representational capacity of Model I may ultimately yield better downstream planning performance. Still, further investigation should therefore assess the generalizability of these findings across a wider range of training scenarios. 

\begin{table}[tbp]
    \footnotesize
    \centering
    \caption{Number of test scenarios in which each planner achieves the lowest weighted objective among all four planners.}
    \label{tab:best_planner}
    \begin{tabular}{lcc}
    \toprule
    Planner & Weighted Cost (TET) & Weighted Cost (TIT) \\
    \midrule
    I-TET  & 20 & 20 \\
    I-TIT  & 10 & 14 \\
    II-TET   & 44 & 50 \\
    II-TIT   & 26 & 16 \\
    \bottomrule
    \end{tabular}
\end{table}

\begin{table}[tb]
    \footnotesize
    \centering
    \caption{Safety-cost prediction errors on each planner's own planned trajectories over all test scenarios.}
    \label{tab:error}
    \begin{tabular}{lccc}
        \toprule
        Model & Label & MAE & RMSE \\
        \midrule 
        Proposed Model I  & TET & 0.6570  & 1.6297  \\
        Proposed Model II & TET & 0.6383  & 1.1963  \\
        Proposed Model I  & TIT & 7.2254  & 17.1961 \\
        Proposed Model II & TIT & 6.5026  & 14.5227 \\
        \bottomrule
    \end{tabular}
\end{table}

Another notable observation is that planners trained using TET-based safety metrics generally achieve lower realized safety costs than those with TIT-based metrics. This result may seem unexpected, given that TIT explicitly accounts for the severity of collision risk. In fact, as defined in Eqs.~\eqref{eq:tet} and~\eqref{eq:tit}, TET measures exposure to potentially unsafe interactions, whereas TIT additionally accounts for risk severity through the reciprocal of TTC, which can produce extreme label values for high-risk trajectories. When models are trained with MSE loss, samples with large prediction errors contribute disproportionately to the training loss. Given limited model capacity and training data coverage, this effect may cause the model to prioritize high-risk samples without achieving sufficient prediction accuracy in lower-risk regions. However, optimized trajectories typically lie in these lower-risk regions, where prediction errors may distort the estimated trade-offs among safety, efficiency, and comfort and thereby degrade the resulting trajectories. Consequently, higher overall prediction accuracy does not necessarily translate into better planning performance.

Overall, the proposed planners achieve substantial improvements in planning feasibility, safety, and weighted objective performance compared with the conventional baseline methods. Among the proposed methods, Model II consistently provides more favorable overall objective values, while the TET-based planner generally leads to lower realized safety costs. These findings highlight the importance of considering not only surrogate prediction accuracy and architectural expressiveness, but also the characteristics of the target safety metric and the quality of the resulting optimization decisions. Consequently, in data-driven optimization frameworks, prediction accuracy alone may not be a sufficient criterion for selecting surrogate architectures and training objectives, and end-to-end planning performance should also be considered.

\section{Conclusions and future work} \label{sec:conclusion}

This study develops a data-driven optimization framework for real-time LC trajectory planning in mixed traffic, in which ICNN-class surrogates are introduced to approximate the safety cost and optimal lateral comfort cost for varying LC trajectories. 
The effectiveness and computational efficiency of the associated LP reformulation are theoretically analyzed and experimentally validated, shedding light on how embedding ICNN-class models can extend the reach of rigorous mathematical optimization to complex, computationally demanding problems. 
Moreover, decoders with different architectures are utilized and tested, demonstrating the flexibility of ICNN-class surrogates. 
The experimental results further indicate that the selection of NN models integrated with optimization methods should be guided by their computational efficiency and performance on downstream tasks instead of predictive accuracy alone. 

Further work should leverage larger and more comprehensive naturalistic datasets, covering a wider range of scenarios. 
In addition, the proposed safety surrogate adopts a relatively simple LSTM module to process historical motion states. More advanced architectures can therefore be considered, since the selection of modules on the non-convex path does not affect the validity or computational efficiency of the proposed surrogate and its LP formulation. Finally, this study adopts TET and TIT to quantify the safety cost of LC maneuvers: although these metrics are widely used in conventional CAV trajectory planning studies, further investigation is required to explore whether more suitable safety metrics exist for data-driven optimization-based LC frameworks.

\section*{Acknowledgments}
We acknowledge the computational resources provided by the Aalto Science-IT project.

\clearpage

{
\small
\bibliographystyle{apalike-ejor}
\bibliography{references_tidy}
}

\end{document}